
\documentclass[final,leqno]{siamltex}

\usepackage{amssymb,amsmath,color,psfrag}
\usepackage[draft]{graphicx} 
\usepackage[noend]{algorithm,algorithmic}
\usepackage{xspace}
\usepackage{graphicx,color,psfrag}
\usepackage{caption,subfigure}

\usepackage{amsmath,amssymb,mathrsfs,bm}
\usepackage{dsfont}

 \usepackage[sort,numbers]{natbib}

\newtheorem{remark}[theorem]{Remark}
\newtheorem{assumption}[theorem]{Assumption}
\newtheorem{example}[theorem]{Example}




\newcommand{\oprocendsymbol}{\hbox{$\bullet$}}
\newcommand{\oprocend}{\relax\ifmmode\else\unskip\hfill\fi\oprocendsymbol}

\newcommand{\longthmtitle}[1]{\mbox{}\textup{\textrm{(#1).}}}



\newcommand{\seminorm}[2]{\|#1\|_{\text{\scalebox{1}{$#2$}}}}
\newcommand{\distU}[1]{|#1|_{\text{\scalebox{0.7}{$\mathcal{U}$}}}}

\newcommand{\distset}[2]{|#1|_{\text{\scalebox{0.7}{$#2$}}}}

\newcommand{\norm}[1]{\|#1\|_{\text{\scalebox{1}{$2$}}}}

\newcommand{\psdfunc}[1]{\lyap_2(#1)}
\newcommand{\psdfuncalone}{\lyap_2}
\newcommand{\lyapone}{\lyap_1}
\newcommand{\lyaptwo}{\lyap_2}

\newcommand{\absolute}[1]{|#1|}
\newcommand{\normfrob}[1]{|#1|_{\text{\scalebox{0.8}{$\mathcal{F}$}}}}

\newcommand{\expectation}{\mathbb{E}}

\newcommand{\expectationarg}[1]{\expectation\big[#1\big]}

\newcommand{\probability}{\mathbb{P}}

\newcommand{\co}{\operatorname{conv}}
\newcommand{\epi}{\operatorname{epi}}
\newcommand{\hyp}{\operatorname{hyp}}

\renewcommand{\diag}{\operatorname{diag}}

\newcommand{\kernel}{\operatorname{\mathcal{N}}}

\newcommand{\lambdamax}{\operatorname{\lambda_{\text{max}}}}

\newcommand{\lambdamin}{\operatorname{\lambda_{\text{min}}}}

\DeclareMathOperator*{\esssup}{ess\,sup}

\newcommand{\trace}{\operatorname{trace}}

\newcommand{\tp}{^{\text{\scalebox{0.8}{$T$}}}}

%



\newcommand{\identity}{\mathrm{I}}

\newcommand{\indicator}[1]{\mathbf{1}_{#1}}




\newcommand{\lap}{\mathsf{L}}






\newcommand{\xu}{x_{\text{\scalebox{0.8}{$\Uset$}}}}
\newcommand{\xuperpendicular}{x_{\text{\scalebox{0.8}{$\Uset^{\bot}$}}}}

\newcommand{\hipartialset}{\{\operatorname{P}$i$\}_{i=1}^2}
\newcommand{\hiset}{\{\operatorname{P}$i$\}_{i=1}^3}
\newcommand{\hisettilde}{\{\operatorname{P}$i$\}_{i=1}^3}
\newcommand{\hisetfull}{\{\operatorname{P}$i$\}_{i=1}^4}
\newcommand{\hisetfulltilde}{\{\operatorname{P}$i$\}_{i=1}^4}

\newcommand{\hzero}{\operatorname{P0}}
\newcommand{\hone}{\operatorname{P1}}
\newcommand{\htwo}{\operatorname{P2}}
\newcommand{\hthree}{\operatorname{P3}}
\newcommand{\hfour}{\operatorname{P4}}

\newcommand{\dt}{\mathrm{d}t}
\newcommand{\ds}{\mathrm{d}s}

\newcommand{\relationconstants}{\thicksim}
\newcommand{\relation}{\thicksim^*}
\newcommand{\proper}{\vartriangleleft^{\text{\scalebox{0.8}{$\mathcal{K}$}}}}

\newcommand{\properinfty}{\thicksim^{\text{\scalebox{0.8}{$\mathcal{K}_{\infty}$}}}}
\newcommand{\properinftyccp}{\thicksim^{\text{\scalebox{0.8}{$\mathcal{K}_{\infty}^{cc}$}}}}

\newcommand{\lyapu}{\mathrm{U}}

\newcommand{\lyap}{\mathrm{V}}

\newcommand{\lyapw}{\mathrm{W}}

\newcommand{\Uset}{\mathcal{U}}

\newcommand{\muISS}{\mu}
\newcommand{\muISStilde}{\tilde{\mu}}
\newcommand{\increasingw}{\eta}
\newcommand{\gainnoise}{\sigma}

\newcommand{\gainNSSone}{\alpha_1}
\newcommand{\gainNSStwo}{\alpha_2}




\newcommand{\lyapbar}{\bar{\mathrm{V}}}

\newcommand{\lyapwbar}{\bar{\mathrm{W}}}

\newcommand{\lyaphat}{\hat{\mathrm{V}}}
\newcommand{\lyapwhat}{\hat{\mathrm{W}}}

\newcommand{\lyapudot}{\dot{\mathrm{U}}}




\newcommand{\convexf}{h}





\renewcommand{\grad}{\nabla}
\newcommand{\hessian}{\nabla^2}

\newcommand{\Input}{d}





\newcommand{\brownian}{\mathrm{B}}
\newcommand{\db}{\mathrm{dB}}

\newcommand{\fprocesssolution}{\{f(x(t),t)\}_{t\ge t_0}}

\newcommand{\gprocesssolution}{\{\galone(x(t),t)\}_{t\ge t_0}}

\newcommand{\fprocess}{\{f(t)\}_{t\ge t_0}}

\newcommand{\xprocess}{\{x(t)\}_{t\ge t_0}}

\newcommand{\xbarprocess}{\{\bar{x}(t)\}_{t\ge t_0}}

\newcommand{\borel}{\mathcal{B}}

\newcommand{\sigmaalgebra}{\mathcal{F}}
\newcommand{\filtrations}{\mathcal{F}_s}
\newcommand{\filtrationt}{\mathcal{F}_t}
\newcommand{\filtration}{\{\mathcal{F}_t\}_{t\ge0}}

\newcommand{\filtrationadapted}{\{\mathcal{F}_t\}}

\newcommand{\differential}{\mathrm{d}}

\newcommand{\ito}[2]{\mathcal{L}[#1](#2)}
\newcommand{\itowoarguments}[1]{\mathcal{L}[#1]}

\newcommand{\galone}{G}

\newcommand{\gs}{G(x(s),s)}
\newcommand{\gtx}{G(x,t)}

\newcommand{\gty}{G(y,t)}

\newcommand{\Atilde}{\tilde{A}}

\newcommand{\At}{A\tp}

\newcommand{\Bt}{B\tp}

\newcommand{\Obig}{\mathcal{O}}

\newcommand{\phip}{\phi_{\text{\scalebox{0.8}{$p$}}}}
\newcommand{\psip}{\psi_{\text{\scalebox{0.8}{$p$}}}}
\newcommand{\phiq}{\phi_{\text{\scalebox{0.8}{$q$}}}}
\newcommand{\psiq}{\psi_{\text{\scalebox{0.8}{$q$}}}}
\newcommand{\phione}{\phi_{\text{\scalebox{0.8}{$1$}}}}
\newcommand{\psione}{\psi_{\text{\scalebox{0.8}{$1$}}}}
\newcommand{\psionetilde}{\hat{\psi}_{\text{\scalebox{0.8}{$1$}}}}








\newcommand{\lone}{\mathcal{L}^1([t_0,\infty);\real^n)}

\newcommand{\ltwomatrices}{\mathcal{L}^2([t_0,\infty);\real^{n \times m})}

\newcommand{\dom}{\mathcal{D}}

\newcommand{\psddomain}{\mathcal{C}(\mathcal{D}\,;\real_{{\text{\scalebox{0.7}{$\ge 0$}}}})}
\newcommand{\psddomaintwice}{\mathcal{C}^{2}(\mathcal{D}\,;\real_{{\text{\scalebox{0.7}{$\ge 0$}}}})}

\newcommand{\psdtwice}{\mathcal{C}^{2}(\real^n;\real_{{\text{\scalebox{0.7}{$\ge 0$}}}})}

\newcommand{\psd}{\mathcal{C}(\real^n;\real_{{\text{\scalebox{0.7}{$\ge 0$}}}})}

\newcommand{\classk}{\mathcal{K}}
\newcommand{\classkl}{\mathcal{KL}}

\newcommand{\classkinfty}{\mathcal{K}_{\infty}}
\newcommand{\classkinftyccp}{\mathcal{K}_{\infty}^{cc}}

\newcommand{\realnm}{{\mathbb{R}^{n\times m}}}
\newcommand{\realmatrices}{{\mathbb{R}^{n\times n}}}

\newcommand{\realmatricesrectangulararg}[2]{\mathbb{R}^{#1\times #2}}

\newcommand{\real}{{\mathbb{R}}}


\newcommand{\realnonnegative}{{\mathbb{R}}_{\ge 0}}

\newcommand{\eps}{\epsilon}

\newcommand{\map}[3]{#1:#2 \rightarrow #3}

\newcommand{\setdef}[2]{\{#1 \, : \, #2\}}


\parindent 0pt
 \parskip 1.5ex



\title{$p$th moment noise-to-state stability of stochastic
  differential equations with persistent noise\footnotemark[1]}
  

\author{David Mateos-N\'u\~nez \and Jorge Cort\'es\footnotemark[2]}

\begin{document}
\maketitle \renewcommand{\thefootnote}{\fnsymbol{footnote}}


\footnotetext[1]{A preliminary version of this manuscript was
  presented as~\cite{DMN-JC:13-acc} at the 2013 American Control
  Conference, Washington, D.C. This manuscript is a revision of the version submitted to
  SIAM Journal on Control and Optimization 52 (4) (2014), 2399-2421.}
 
\footnotetext[2]{The authors are with the Department of Mechanical and
  Aerospace Engineering, University of California, San Diego, 9500
  Gilman Dr, La Jolla, CA 92093, USA, {\tt \small
    \{dmateosn,cortes\}@ucsd.edu}.}

\renewcommand{\thefootnote}{\arabic{footnote}}

\begin{abstract}
  This paper studies the stability properties of stochastic
  differential equations subject to persistent noise (including the
  case of additive noise), which is noise that is present even at the
  equilibria of the underlying differential equation and does not
  decay with time.  The class of systems we consider exhibit
  disturbance attenuation outside a closed, not necessarily bounded,
  set.  We identify conditions, based on the existence of Lyapunov
  functions, to establish the noise-to-state stability in probability
  and in \textit{p}th~moment of the system with respect to a closed
  set.  As part of our analysis, we study the concept of two functions
  being proper with respect to each other formalized via pair of
  inequalities with comparison functions. We show that such
  inequalities define several equivalence relations for increasingly
  strong refinements on the comparison functions.  We also provide a
  complete characterization of the properties that a pair of functions
  must satisfy to belong to the same equivalence class.  This
  characterization allows us to provide checkable conditions to
  determine whether a function satisfies the requirements to be a
  strong NSS-Lyapunov function in probability or a $p$th~moment
  NSS-Lyapunov function. Several examples illustrate our results.
\end{abstract}

\section{Introduction}\label{sec:intro}

Stochastic differential equations (SDEs) go beyond ordinary
differential equations (ODEs) to deal with systems subject to
stochastic perturbations of a particular type, known as white noise.
Applications are numerous and include option pricing in the stock
market, networked systems with noisy communication channels, and, in
general, scenarios whose complexity cannot be captured by
deterministic models. In this paper, we study SDEs subject to
\emph{persistent noise} (including the case of additive noise), i.e.,
systems for which the noise is present even at the equilibria of the
underlying ODE and does not decay with time. Such scenarios arise, for
instance, in control-affine systems when the input is corrupted by
persistent noise.  For these systems, the presence of persistent noise
makes it impossible to establish in general a stochastic notion of
asymptotic stability for the (possibly unbounded) set of equilibria of
the underlying ODE.  Our aim here is to develop notions and tools to
study the stability properties of these systems and provide
probabilistic guarantees on the size of the state of the system.

\emph{Literature review:} In general, it is not possible to obtain
explicit descriptions of the solutions of SDEs.  Fortunately, the
Lyapunov techniques used to study the qualitative behavior of
ODEs~\cite{HKK:02,RFB:91} can be adapted to study the stability
properties of SDEs as well~\cite{RK:12,UHT:97,XM:99}.  Depending on
the notion of stochastic convergence used, there are several types of
stability results in SDEs.  These include stochastic stability (or
stability in probability), stochastic asymptotic stability, almost
sure exponential stability, and \textit{p}th moment asymptotic
stability, see e.g.,~\cite{UHT:97,XM:99,XM:11,TT:03}.
However, these notions are not appropriate in the presence of
persistent noise because they require the effect of the noise on the
set of equilibria to either vanish or decay with time.
To deal with persistent noise, as well as other system properties like
delays, a concept of ultimate boundedness is required that generalizes
the notion of convergence. As an example, for stochastic delay
differential equations,~\cite{FW-PEK:13} considers a notion of
ultimate bound in $p$th moment~\cite{HS:01} and employs Lyapunov techniques to
establish it.
More generally, for mean-square random dynamical systems, the concept
of forward attractor~\cite{PEK-TL:12} describes a notion of
convergence to a dynamic neighborhood and employs contraction analysis to establish it.
Similar notions of ultimate boundedness for the state of a system, now
in terms of the size of the disturbance, are also used for
differential equations, and
%
%
%
many of these notions are inspired by dissipativity properties of the
system that are captured via \emph{dissipation inequalities} of a
suitable Lyapunov function:
such inequalities encode the fact that the Lyapunov function decreases
along the trajectories of the system as long as the state is ``big
enough'' with regards to the disturbance.  As an example, the concept
of input-to-state stability (ISS) goes hand in hand with the concept
of ISS-Lyapunov function, since the existence of the second implies
the former (and, in many cases, a converse result is also
true~\cite{EDS-YW:95}). Along these lines, the notion of practical
stochastic input-to-state stability (SISS)
in~\cite{SL-JZ-ZJ:08,ZJW-XJX-SYZ:07} generalizes the concept of ISS to
SDEs where the disturbance or input affects both the deterministic
term of the dynamics and the diffusion term modeling the role of the
noise.  Under this notion, the state bound is guaranteed in
probability, and also depends, as in the case of ISS, on a decaying
effect of the initial condition plus an increasing function of the sum
of the size of the input and a positive constant related to the
persistent noise.
%
For systems where the input modulates the covariance of the noise,
SISS corresponds to noise-to-state-stability (NSS)~\cite{HD-MK:00},
which guarantees, in probability, an ultimate bound for the state that
depends on the magnitude of the noise covariance.  That is, the noise
in this case plays the main role, since the covariance can be
modulated arbitrarily and can be unknown.  This is the appropriate
notion of stability for the class of SDEs with persistent
noise considered in this paper, which are nonlinear systems affine in
the input, where the input corresponds to white noise with locally
bounded covariance. Such systems cannot be studied under the ISS
umbrella, because the stochastic integral against Brownian motion has
infinite variation, whereas the integral of a legitimate input for~ISS
must have finite variation.


\emph{Statement of contributions:} The contributions of this paper are
twofold.  Our first contribution concerns the noise-to-state stability
of systems described by SDEs with persistent noise.  We
generalize the notion of noise-dissipative Lyapunov function, which is
a positive semidefinite function that satisfies a dissipation
inequality that can be nonexponential (by this we mean that the
inequality admits a convex~$\classkinfty$ gain instead of the linear
gain characteristic of exponential dissipativity).  We also introduce
the concept of $p$thNSS-Lyapunov function with respect to a closed
set, which is a noise-dissipative Lyapunov function that in addition
is proper with respect to the set with a convex lower-bound gain
function.  Using this framework, we show that noise-dissipative
Lyapunov functions have NSS~dynamics and we characterize the overshoot
gain.  More importantly, we show that the existence of a
\textit{p}thNSS-Lyapunov function with respect to a closed set implies
that the system is NSS in \textit{p}th moment with respect to the set.
Our second contribution is driven by the aim of providing alternative,
structured ways to check the hypotheses of the above results.  We
introduce the notion of two functions being proper with respect to
each other as a generalization of the notion of properness with
respect to a set.  We then develop a methodology to verify whether two
functions are proper with respect to each other by analyzing the
associated pair of inequalities with increasingly strong refinements
that involve the classes~$\classk$, $\classkinfty$, and $\classkinfty$
plus a convexity property.  We show that these refinements define
equivalence relations between pairs of functions, thereby producing
nested partitions on the space of functions.  This provides a useful
way to deal with these inequalities because the construction of the
gains is explicit when the transitivity property is exploited. This
formalism motivates our characterization of positive semidefinite
functions that are proper, in the various refinements, with respect to
the Euclidean distance to their nullset.  This characterization is
technically challenging because we allow the set to be noncompact, and
thus the pre-comparison functions can be discontinuous. We devote
special attention to the case when the set is a subspace and examine
the connection with seminorms.  Finally, we show how this framework
allows us to develop an alternative formulation of our stability
results.


\emph{Organization:} The paper is organized as
follows. Section~\ref{sec:preliminaries} introduces preliminaries on
seminorms, comparison functions, and SDEs.
Section~\ref{sec:problem-statement} presents the NSS stability results
and Section~\ref{sec:positive semidefinite-functions} develops the
methodology to help verify their hypotheses. Finally,
Section~\ref{sec:conclusions-future} discusses our conclusions and
ideas for future work.

\section{Preliminary notions}\label{sec:preliminaries}

This section reviews some notions on comparison functions and
stochastic differential equations that are used throughout the paper.

\subsection{Notational conventions}\label{sec:notation}

Let $\real$ and $\realnonnegative$ be the sets of real and nonnegative
real numbers, respectively.  We denote by~$\real^n$ the
$n$-dimensional Euclidean space.  A subspace $\Uset\subseteq\real^n$
is a subset which is also a vector space. Given a matrix
$A\in\realmatricesrectangulararg{m}{n}$, its nullspace
$\kernel(A)\triangleq\{x\in\real^n:Ax=0\}$ is a subspace.  Given
$\dom\subseteq\real^n$, we denote by~$\psddomain$
and~$\psddomaintwice$ the set of positive semidefinite functions
defined on $\dom$ that are continuous and continuously twice
differentiable (if $\dom$ is open), respectively. Given
$\lyap\in\psdtwice$, we denote its gradient by~$\grad\lyap$ and its
Hessian by~$\hessian\lyap$.  A seminorm is a function
$S:\real^n\to\real$ that is positively homogeneous, i.e., $S(\lambda
x) = \absolute{\lambda} S(x)$ for any $\lambda\in \real$, and
satisfies the triangular inequality, i.e., $S(x+y)\le S(x)+S(y)$ for
any $x,y\in \real^n$. From these properties it can be deduced that
$S\in\psd$ and its nullset is always a subspace. If, moreover, the
function $S$ is positive definite, i.e., $S(x)=0$ implies $x=0$, then
$S$ is a norm. The Euclidean norm of $x\in\real^n$ is denoted
by~$\norm{x}$, and the Frobenius norm of the matrix
$A\in\realmatricesrectangulararg{m}{n}$ is
$\normfrob{A}\triangleq\sqrt{\trace{(A\tp A)}} = \sqrt{\trace{(A
    A\tp)}}$. For any matrix $A\in\realmatricesrectangulararg{m}{n}$,
the function $\seminorm{x}{A}\triangleq\norm{Ax}$ is a seminorm and
can be viewed as a distance to $\kernel(A)$. For a symmetric positive
semidefinite real matrix $A\in\realmatrices$, we order its eigenvalues
as $\lambdamax(A)\triangleq\lambda_1(A)\ge
\dots\ge\lambda_n(A)\triangleq\lambdamin(A)$, so if the dimension of
$\kernel(A)$ verifies $\dim(\kernel(A))=k\le n$, then
$\lambda_{n-k}(A)$ is the minimum nonzero eigenvalue of~$A$.
The Euclidean distance from $x$ to a set $\Uset \subseteq\real^n $ is
defined by
$\distU{x}\triangleq\inf\setdef{\norm{x-u}}{u\in\Uset}$. The function
$\distU{.}$ is continuous when~$\Uset$ is closed.  Given
$\map{f,g}{\realnonnegative}{\realnonnegative}$, we say that $f(s)$ is
in $\Obig(g(s))$ as $s\to\infty$ if there exist constants $\kappa,
s_0>0$ such that $f(s)<\kappa g(s)$ for all $s>s_0$.

\subsection{Comparison, convex, and concave
  functions}\label{sec:pre-classks-convexity}

Here we introduce some classes of comparison functions
following~\cite{HKK:02} that are useful in our technical treatment. A
continuous function $\alpha:[0,b)\to\realnonnegative$, for $b>0$ or
$b=\infty$, is class $\classk$ if $\alpha(0)=0$ and is strictly
increasing. A function
$\map{\alpha}{\realnonnegative}{\realnonnegative}$ is
class~$\classkinfty$ if $\alpha\in\classk$ and is unbounded. A
continuous function
$\map{\mu}{\realnonnegative\times\realnonnegative}{\realnonnegative}$
is class~$\classkl$ if, for each fixed $s\ge 0$, the function $r
\mapsto \mu(r,s)$ is class $\classk$, and, for each fixed $r\ge 0$,
the function $s\mapsto \mu(r,s)$ is decreasing and
$\lim_{s\to\infty}\mu(r,s)=0$.  If $\alpha_1$, $\alpha_2$ are
class~$\classk$ and the domain of $\alpha_1$ contains the range of
$\alpha_2$, then their composition $\alpha_1\circ\alpha_2$ is
class~$\classk$ too.  If $\alpha_3$, $\alpha_4$ are
class~$\classkinfty$, then both the inverse function $\alpha_3^{-1}$
and their composition $\alpha_3\circ\alpha_4$ are
class~$\classkinfty$. In our technical treatment, it is sometimes
convenient to require comparison functions to satisfy additional
convexity properties.  A real-valued function $f$ defined in a convex
set $X$ in a vector space is convex if $f(\lambda x+(1-\lambda)y)\le
\lambda f(x) + (1-\lambda)f(y)$ for each $x,y\in X$ and any
$\lambda\in[0,1]$, and is concave if $-f$ is
convex. By~\cite[Ex.~3.3]{SB-LV:09}, if $f:[a,b]\to[f(a),f(b)]$ is a
strictly increasing convex (respectively, concave) function, then the
inverse function $f^{-1}:[f(a),f(b)]\to [a,b]$ is strictly increasing
and concave (respectively, convex). Also, following~\cite[Section
3]{SB-LV:09}, if $f,g:\real\to\real$ are convex (respectively,
concave) and $f$ is nondecreasing, then the composition $f\circ g$ is
also convex (respectively, concave).

\subsection{Brownian motion}\label{sec:brownian}

We review some basic facts on probability and introduce the notion of
Brownian motion following~\cite{XM:11}.  Throughout the paper, we
assume that $(\Omega, \sigmaalgebra, \filtration, \probability)$ is a
complete probability space, where $\probability$ is a probability
measure defined on 
the $\sigma$-algebra $\sigmaalgebra$, which contains all the subsets
of $\Omega$ of probability $0$. The filtration $\filtration$ is a
family of sub-$\sigma$-algebras of $\sigmaalgebra$ satisfying
$\filtrationt\subseteq\filtrations\subseteq\sigmaalgebra$ for any
$0\le t < s < \infty$; we assume it is right continuous, i.e.,
$\filtrationt=\cap_{s>t}\filtrations$ for any $t\ge0$, and
$\mathcal{F}_0$ contains all the subsets of $\Omega$ of
probability~$0$.  The Borel $\sigma$-algebra in $\real^n$, denoted by
$\borel^n$, or in $[t_0,\infty)$, denoted by $\borel([t_0,\infty))$,
are the smallest $\sigma$-algebras that contain all the open sets in
$\real^n$ or $[t_0,\infty)$, respectively. A function
$X:\Omega\to\real^n$ is $\sigmaalgebra$-measurable if the set
$\{\omega\in\Omega: X(\omega)\in A\}$ belongs to $\sigmaalgebra$ for
any $A\in\borel^n$. We call such function a
($\sigmaalgebra$-measurable) $\real^n$-valued random variable. If $X$
is a real-valued random variable that is integrable with respect to
$\probability$, its expectation is $\expectation [X]=\int_{\Omega}
X(\omega) \differential\probability(\omega)$. A function
$f:\Omega\times[t_0,\infty)\to\real^n$ is
$\sigmaalgebra\times\borel$-measurable (or just measurable) if the set
$\{(\omega, t)\in\Omega\times[t_0,\infty): f(\omega, t)\in A\}$
belongs to $\sigmaalgebra\times\borel([t_0,\infty))$ for any
$A\in\borel^n$. We call such function an $\filtrationadapted$-adapted
process if $\map{f(.,t)}{\Omega}{\real^n}$ is
$\filtrationt$-measurable for every $t\ge t_0$. At times, we omit the
dependence on ``$\omega$'', in the sense that we refer to the indexed
family of random variables, and refer to the random process
$f=\fprocess$. We define $\lone$ as the set of all $\real^n$-valued
measurable $\filtrationadapted$-adapted processes $f$ such that
$\probability (\setdef{\omega\in\Omega}{\int_{t_0}^T \norm{f(\omega,
    s)}\, \ds<\infty})=1$ for every~$T>t_0$. Similarly,
$\ltwomatrices$ denotes the set of all $\realnm$-matrix-valued
measurable $\filtrationadapted$-adapted processes $\galone$ such that
$\probability (\setdef{\omega\in\Omega} {\int_{t_0}^T
  \normfrob{\galone(\omega, s)}^2\, \ds<\infty})=1$ for every~$T>t_0$.

A one-dimensional Brownian motion
$\map{\brownian}{\Omega\times[t_0,\infty)}{\real}$ defined in the
probability space $(\Omega, \sigmaalgebra, \filtration, \probability)$
is an $\filtrationadapted$-adapted process such that
\begin{itemize}
\item 
$\probability (\setdef{\omega\in\Omega}{\brownian(\omega,
    t_0)=0})=1$;

\item the mapping $\map{\brownian(\omega, .)}{[t_0,\infty)}{\real}$,
  called sample path, is continuous also with probability~$1$;

\item the increment $\brownian(.,t)-\brownian(.,s):\Omega\to\real$ is
  independent of $\filtrations$ for $t_0 \le s < t < \infty$ (i.e., if
  $S_b\triangleq\{\omega\in\Omega: \brownian(\omega,
  t)-\brownian(\omega, s)\in(-\infty, b)\}$, for $b\in\real$, then
  $\probability(A\cap S_b)= \probability(A)\probability(S_b)$ for all
  $A\in\filtrations$ and all $b \in \real$). In addition, this
  increment is normally distributed with zero mean and variance $t-s$.
\end{itemize}

An $m$-dimensional Brownian motion
$\map{\brownian}{\Omega\times[t_0,\infty)}{\real^m}$ is given by
$\brownian(\omega, t)= [\brownian_1(\omega,
t),\dots,\brownian_m(\omega, t)]\tp$, where each $\brownian_i$ is a
one-dimensional Brownian motion and, for each $t\ge t_0$, the random variables
$\brownian_1(t),...,\brownian_m(t)$ are independent.

\subsection{Stochastic differential equations}\label{sec:prel-sdes}
Here we review some basic notions on stochastic differential equations
(SDEs) following~\cite{XM:11}; other useful references
are~\cite{RK:12,BO:10,JRM:11}.  Consider the $n$-dimensional SDE
\begin{align}\label{eq:nonlinear-SDE-preliminaries}
  \differential x(\omega, t)= f\big(x(\omega, t),t\big)\dt
  +G\big(x(\omega, t),t\big)\Sigma(t)\,\db(\omega, t),
\end{align}
where $x(\omega, t)\in\real^n$ is a realization at time $t$ of the
random variable $\map{x(.,t)}{\Omega}{\real^n}$, for
$t\in[t_0,\infty)$. 
The initial condition is given by $x(\omega, t_0)=x_0$ with
probability~$1$ for some $x_0\in\real^n$.  The functions
$\map{f}{\real^n\times[t_0,\infty)}{\real^n}$,
$\galone:\real^n\times[t_0,\infty)\to\realmatricesrectangulararg{n}{q}$,
and $\map{\Sigma}{[t_0,\infty)}{\realmatricesrectangulararg{q}{m}}$
are measurable.  The functions $f$ and $\galone$ are regarded as a
model for the architecture of the system and, instead, $\Sigma$ is
part of the model for the stochastic disturbance; at any given time
$\Sigma$ determines a linear transformation of the $m$-dimensional
Brownian motion $\{\brownian(t)\}_{t\ge t_0}$, so that at time $t\ge
t_0$ the input to the system is
the process $\{\Sigma(t)\brownian(t)\}_{t\ge t_0}$, with covariance
$\int_{t_0}^t \Sigma(t)\Sigma(t)\tp \ds$.  The distinction between the
roles of~$\galone$ and~$\Sigma$ is irrelevant for the SDE; both
together determine the effect of the Brownian motion. 
The integral form of~\eqref{eq:nonlinear-SDE-preliminaries} is
given by
\begin{align*}
  x(\omega, t)=x_0+\int_{t_0}^t f\big(x(\omega, s),s\big) \ds +
  \int_{t_0}^t G\big(x(\omega, s),s\big)\Sigma(s)\,\db(\omega, s),
\end{align*}
where the second integral is an stochastic
integral~\cite[p. 18]{XM:11}.
A $\real^n$-valued random process $\xprocess$ is a solution
of~\eqref{eq:nonlinear-SDE-preliminaries} with initial value $x_0$ if
\begin{enumerate}
\item is continuous with probability~$1$,
  $\filtrationadapted$-adapted, and satisfies $x(\omega, t_0)=x_0$
  with probability~$1$,
\item the processes $\fprocesssolution$ and $\gprocesssolution$ belong
  to $\lone$ and $\ltwomatrices$ respectively, and
\item equation~\eqref{eq:nonlinear-SDE-preliminaries} holds for every
  $t\ge t_0$ with probability~$1$.
\end{enumerate}
A solution $\xprocess$ of~\eqref{eq:nonlinear-SDE-preliminaries} is
unique if any other solution $\xbarprocess$ with $\bar{x}(t_0)=x_0$
differs from it only in a set of probability~$0$, that is,
$\probability (\big\{x(t)=\bar{x}(t)$\; $\forall\, t\ge t_0\big\})=1$.

We make the following assumptions on the objects
defining~\eqref{eq:nonlinear-SDE-preliminaries} to guarantee existence
and uniqueness of solutions.

\begin{assumption}\label{ass:assumptions-SDE}
  We assume $\Sigma$ is essentially locally bounded. Furthermore, for
  any $T>t_0$ and $n\ge 1$, we assume there exists $K_{T,n}>0$ such
  that, for almost every $t\in[t_0,T]$ and all $x,y\in\real^n$ with
  $\max\big\{\norm{x} ,\norm{y}\big\}\le n$,
  \begin{align*}
    \max&\big\{\,\norm{f(x,t)-f(y,t)}^2\,,\,\normfrob{\gtx-\gty}^2\,\big\}
    \le K_{T,n}\norm{x-y}^2.
  \end{align*}
  Finally, we assume that for any $T>t_0$, there exists $K_T>0$ such
  that, for almost every $t\in[t_0,T]$ and all $x\in\real^n$, $
  x\tp f(x,t)+\tfrac{1}{2}\normfrob{\gtx}^2\le K_T(1+\norm{x}^2)$.
\end{assumption}

According to~\cite[Th. 3.6, p. 58]{XM:11},
Assumption~\ref{ass:assumptions-SDE} is sufficient to guarantee global
existence and uniqueness of solutions
of~\eqref{eq:nonlinear-SDE-preliminaries} for each initial condition
$x_0\in\real^n$.

We conclude this section by presenting a useful operator in the
stability analysis of SDEs.  Given a function $\lyap\in\psdtwice$, we
define the generator of~\eqref{eq:nonlinear-SDE-preliminaries} acting
on the function $\lyap$ as the mapping
$\map{\itowoarguments{\lyap}}{\real^n\times[t_0,\infty)}{\real}$ given
by
\begin{align}\label{eq:Ito-operator}
  \ito{\lyap}{x,t}\triangleq\grad\lyap(x)\tp f(x,t)
  +\tfrac{1}{2}\trace\Big(\Sigma(t)\tp\gtx\tp
  \hessian\lyap(x)\gtx\Sigma(t)\Big).
\end{align}
It can be shown that $\ito{\lyap}{x,t}$ gives the expected rate of
change of $\lyap$ along a solution
of~\eqref{eq:nonlinear-SDE-preliminaries} that passes through the
point~$x$ at time~$t$, so it is a generalization of the Lie
derivative. According to~\cite[Th. 6.4, p. 36]{XM:11}, if we evaluate $\lyap$
along the solution $\xprocess$
of~\eqref{eq:nonlinear-SDE-preliminaries}, then the process
$\{\lyap(x(t))\}_{t\ge t_0}$ satisfies the new SDE
\begin{align}\label{eq:sde-lyapunov-function}
  \lyap(x(t)) & = \lyap(x_0) +\int_{t_0}^t\ito{\lyap}{x(s),s}\ds +
  \int_{t_0}^t\grad\lyap(x(s))\tp\gs\Sigma(s)\db(s).
\end{align}
Equation~\eqref{eq:sde-lyapunov-function} is known as It\^o's formula
and corresponds to the stochastic version of the chain rule.

\section{Noise-to-state stability via noise-dissipative Lyapunov
  functions}\label{sec:problem-statement}

In this section, we study the stability of stochastic differential
equations subject to persistent noise. Our first step is the
introduction of a novel notion of stability.  This captures the
behavior of the $p$th moment of the distance (of the state) to a
given closed set, as a function of two objects: the initial condition
and the maximum size of the covariance. After this, our next step is
to derive several Lyapunov-type stability results that help determine
whether a stochastic differential equation enjoys these stability
properties.
The following definition generalizes the concept of noise-to-state
stability given in~\cite{HD-MK:00}.

\begin{definition}\longthmtitle{Noise-to-state stability with respect
    to a set}\label{def:NSS-expectation-subspace}
  The system~\eqref{eq:nonlinear-SDE-preliminaries} is
  \emph{noise-to-state stable (NSS) in probability with respect to the
    set}~$\Uset\subseteq\real^n$ if for any $\epsilon>0$ there exist
  $\mu\in\classkl$ and~$\theta\in\classk$ (that might depend
  on~$\eps$), such that
  \begin{align}\label{eq:def-ISS-probability}
    \probability \Big\{\,\distU{x(t)}^p > \mu\big(\distU{x_0},
    t-t_0\big) + \theta\Big(\esssup_{t_0\le s\le
      t}\normfrob{\Sigma(s)}\Big)\,\Big\} \le\eps,
  \end{align}
  for all $t\ge t_0$ and any $x_0\in\real^n$.  And the
  system~\eqref{eq:nonlinear-SDE-preliminaries} is \emph{$p$th moment
    noise-to-state stable ($p$thNSS) with respect to}~$\Uset$ if there
  exist $\mu\in\classkl$ and $\theta\in\classk$, such that
  \begin{align}\label{eq:def-ISS-expectation}
    \expectationarg{\distU{x(t)}^p} & \le
    \mu\big(\distU{x_0}, t-t_0\big) + \theta\Big(\esssup_{t_0\le
      s\le t}\normfrob{\Sigma(s)}\Big) ,
  \end{align}
  for all $t\ge t_0$ and any $x_0\in\real^n$.  The gain functions
  $\mu$ and $\theta$ are the \emph{overshoot gain} and the
  \emph{noise gain}, respectively.
\end{definition}

The quantity $\normfrob{\Sigma(t)} = \sqrt{ \trace\big(\Sigma(t)
  \Sigma(t)\tp\big)}$ is a measure of the size of the noise because it
is related to the infinitesimal covariance
$\Sigma(t)\Sigma(t)\tp$. The choice of the $p$th power is irrelevant
in the statement in probability since one could take any
$\classkinfty$ function evaluated at $\distU{x(t)}$. However, this
would make a difference in the statement in expectation. (Also, we use the
same power for convenience.) When the set $\Uset$ is a subspace, we can
substitute~$\distU{.}$ by $\seminorm{.}{A}$, for some matrix
$A\in\realmatricesrectangulararg{m}{n}$ with $\kernel(A)=\Uset$. In
such a case, the definition above does not depend on the choice of the
matrix~$A$.

\begin{remark}\longthmtitle{NSS is not a particular case of ISS}
  {\rm The concept of NSS is not a particular case of input-to-state
    stability (ISS)~\cite{EDS:08} for systems that are affine in the
    input, namely,
    \begin{align*} 
      \dot{y} = f(y,t) + \galone(y,t) u(t) \;\:\Leftrightarrow\;\:
      y(t)=y(t_0)+ \int_{t_0}^t f(y(s),s)\,\ds +\int_{t_0}^t G(y(s),s)
      u(s)\,\ds,
    \end{align*}
    where $\map{u}{[t_0, \infty)}{\real^q}$ is measurable and
    essentially locally bounded~\cite[Sec. C.2]{EDS:98}. The reason is
    the following: the components of the vector-valued function
    $\int_{t_0}^t G(y(s),s) u(s)\,\ds$ are differentiable almost
    everywhere by the Lebesgue fundamental theorem of
    calculus~\cite[p. 289]{JNM-NAW:99}, and thus absolutely
    continuous~\cite[p. 292]{JNM-NAW:99} and with bounded
    variation~\cite[Prop. 8.5]{JNM-NAW:99}. On the other hand, at
    any time previous to $t_k(t)\triangleq\min\{t,\inf{\setdef{s\ge t_0}{\norm{x(s)}\ge k}}\}$, 
    the
        driving disturbance of~\eqref{eq:nonlinear-SDE-preliminaries} is
    the vector-valued function $\int_{t_0}^{t_k(t)}
    \gs\Sigma(s)\db(s)$, whose $i$th component has quadratic
    variation~\cite[Th. 5.14, p. 25]{XM:11} equal to
    \begin{align*}
      \int_{t_0}^{t_k(t)}\sum\limits_{j=1}^{m}
      \absolute{\sum\limits_{l=1}^{q} \gs_{il}\Sigma(s)_{lj}}^2\ds > 0.
    \end{align*}
    Since a continuous process that has positive quadratic variation
    must have infinite variation~\cite[Th. 1.10]{FCK:05}, we conclude
    that the driving disturbance in this case is not allowed in the
    ISS framework.}  \oprocend
\end{remark}

Our first goal now is to provide tools to establish whether a
stochastic differential equation enjoys the noise-to-state stability
properties given in Definition~\ref{def:NSS-expectation-subspace}. To
achieve this, we look at the dissipativity properties of a special
kind of energy functions along the solutions
of~\eqref{eq:nonlinear-SDE-preliminaries}.

\begin{definition}\longthmtitle{Noise-dissipative Lyapunov
    function}\label{def:noise-dissipative-Lyapunov}
  A function $\lyap \in \psdtwice$ is a \emph{noise-dissipative
    Lyapunov function} for~\eqref{eq:nonlinear-SDE-preliminaries} if
  there exist $\lyapw\in\psd$, $\gainnoise\in\classk$, and concave
  $\increasingw\in\classkinfty$ such that
  \begin{align}\label{eq:theorem-second-hypothesis-VandW}
    \lyap(x)\le\increasingw(\lyapw(x)),
  \end{align}
  for all $x\in\real^n$, and the following dissipation inequality holds:
  \begin{align}\label{eq:theorem-hypothesis-ito}
    \itowoarguments{\lyap}(x,t)\le-\lyapw(x) +
    \gainnoise\big(\normfrob{\Sigma(t)}\big),
  \end{align}
  for all $(x,t)\in\real^n\times[t_0,\infty)$.
\end{definition}

\begin{remark}\longthmtitle{It\^o formula and exponential
    dissipativity}\label{re:convex-dissipativity-vs-exponential-dissipativity}
  {\rm Interestingly, the
    conditions~\eqref{eq:theorem-second-hypothesis-VandW}
    and~\eqref{eq:theorem-hypothesis-ito} are equivalent to
    \begin{align}\label{eq:theorem-dissipative-condition}
      \itowoarguments{\lyap}(x,t)\le-\increasingw^{-1}(\lyap(x))+
      \gainnoise\big(\normfrob{\Sigma(t)}\big),
    \end{align}
    for all $x\in\real^n$, where $\increasingw^{-1}\in\classkinfty$ is
    convex.  Note that, since $\itowoarguments{\lyap}$ is not the Lie
    derivative of $\lyap$ (as it contains the Hessian of $\lyap$), one
    cannot directly deduce
    from~\eqref{eq:theorem-dissipative-condition} the existence of a
    continuously twice differentiable function $\tilde{\lyap}$ such that
    \begin{align}\label{eq:exponential-dissipativity}
      \itowoarguments{\tilde{\lyap}}(x,t)\le-c\tilde{\lyap}(x)+
      \tilde{\gainnoise}\big(\normfrob{\Sigma(t)}\big),
    \end{align}
    as instead can be done in the context of~ISS, see
    e.g.~\cite{LP-YW:96}.}  \oprocend
\end{remark}
  
\begin{example}\longthmtitle{A noise-dissipative Lyapunov
    function}\label{ex:example}
  \rm{Assume that $\map{\convexf}{\real^n}{\real}$ is continuously
    differentiable and verifies
    \begin{align}\label{eq:convex-uniformly-convex}
      \gamma(\norm{x-x'}^2)\le(x-x')\tp(\grad
      \convexf(x)-\grad\convexf(x'))
    \end{align}
    for some convex function~$\gamma\in\classkinfty$ for all $x,
    x'\in\real^n$. In particular, this implies that~$h$ is
    strictly convex. (Incidentally, any strongly convex function
    verifies~\eqref{eq:convex-uniformly-convex} for some choice
    of~$\gamma$ linear and strictly increasing.) Consider now the
    dynamics
    \begin{align}\label{eq:inexact-distributed-opt}
      \differential x(\omega, t)=-\big(\delta\lap x(\omega, t)
      +\grad\convexf(x(\omega, t))\big)\dt +\Sigma(t)\,\db(\omega, t),
    \end{align}
    for all $t\in[t_0,\infty)$, where $x(\omega, t_0)=x_0$ with
    probability~$1$ for some $x_0\in\real^n$, and $\delta>0$.  Here,
    the matrix $\lap\in\realmatrices$ is symmetric and positive
    semidefinite, and the matrix-valued function
    $\map{\Sigma}{[t_0,\infty)}{\realmatricesrectangulararg{n}{m}}$ is
    continuous.
    This dynamics
    corresponds to the SDE~\eqref{eq:nonlinear-SDE-preliminaries} with
    $f(x,t)\triangleq-\delta\lap x -\grad\convexf(x)$ and
    $\galone(x,t)\triangleq\identity_n$ for all
    $(x,t)\in\real^n\times[t_0,\infty)$.
  
    Let $x^*\in\real^n$ be the unique solution of the
    Karush-Kuhn-Tucker~\cite{SB-LV:09} condition $\delta\lap
    x^*=-\grad\convexf(x^*)$, corresponding to the unconstrained
    minimization of $F(x)\triangleq\tfrac{\delta}{2}x\tp\lap
    x+\convexf(x)$.  Consider then the candidate Lyapunov function
    $\lyap\in\psdtwice$ given by $\lyap(x)\triangleq
    \tfrac{1}{2}(x-x^*)\tp(x-x^*)$.  Using~\eqref{eq:Ito-operator}, we
    obtain that, for all $x\in\real^n$,
    \begin{align*} 
      \ito{\lyap}{x,t} & = -(x-x^*)\tp\Big(\delta\lap x
      +\grad\convexf(x)\Big)
      +\tfrac{1}{2}\trace\Big(\Sigma(t)\tp\Sigma(t)\Big)\nonumber
      \\
      &=- \delta(x-x^*)\tp\lap
      (x-x^*)-(x-x^*)\tp\Big(\grad\convexf(x)-\grad\convexf(x^*)\Big)
      +\tfrac{1}{2}\normfrob{\Sigma(t)}^2\nonumber
      \\
      &\le -\gamma(\norm{x-x^*}^2)+\tfrac{1}{2}\normfrob{\Sigma(t)}^2.
    \end{align*}
    We note that $\lyapw\in\psd$ defined by $\lyapw(x)\triangleq
    \gamma(\norm{x-x^*}^2)$ verifies
    \begin{align*} 
      \lyap(x)=\tfrac{1}{2}\gamma^{-1}\big(\lyapw(x)\big)\;\;\quad\forall
      x\in\real^n,
    \end{align*}
    where~$\gamma^{-1}$ is concave and belongs to the
    class~$\classkinfty$ as explained in
    Section~\ref{sec:pre-classks-convexity}.  Therefore, $\lyap$ is a
    noise-dissipative Lyapunov function
    for~\eqref{eq:inexact-distributed-opt}, with concave
    $\increasingw\in\classkinfty$ given by
    $\increasingw(r)=1/2\gamma^{-1}(r)$ and $\gainnoise\in\classk$
    given by $\gainnoise(r)\triangleq 1/2\,r^2$. \oprocend}
\end{example}

The next result generalizes~\cite[Th. 4.1]{HD-MK-RJW:01} to positive
semidefinite Lyapunov functions that satisfy weaker dissipativity
properties (cf.~\eqref{eq:theorem-dissipative-condition}) than the
typical exponential-like
inequality~\eqref{eq:exponential-dissipativity}, and characterizes the
overshoot gain.

\begin{theorem}\longthmtitle{Noise-dissipative Lyapunov functions have
    an NSS dynamics}\label{th:Stability-Non-Linear-Systems}
  Under Assumption~\ref{ass:assumptions-SDE}, 
  and further assuming that~$\Sigma$ is continuous,
   suppose that~$\lyap$ is
  a noise-dissipative Lyapunov function
  for~\eqref{eq:nonlinear-SDE-preliminaries}.  Then,
  \begin{align}\label{eq:ISS-theorem}
    \expectationarg{\lyap(x(t))} &
    \:\le\muISStilde\big(\lyap(x_0),t-t_0\big)+\increasingw
    \Big(2\,\gainnoise\big(\max_{t_0\le s\le
      t}\normfrob{\Sigma(s)}\big)\Big),
  \end{align}
  for all $t\ge t_0$, where the class~$\classkl$ function $(r,s)
  \mapsto \muISStilde(r,s)$ is well defined as the solution $y(s)$ to the
  initial value problem
  \begin{align}\label{eq:class-kl-beta-satisfies}
    \dot{y}(s) = -\tfrac{1}{2}{\increasingw}^{-1}(y(s)),\quad y(0)=r.
  \end{align}
\end{theorem}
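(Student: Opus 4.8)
The plan is to reduce the $p$th-moment estimate to a deterministic scalar comparison argument for the function $m(t)\triangleq\expectationarg{\lyap(x(t))}$. The three ingredients are It\^o's formula (to pass to expectations), the convexity of $\increasingw^{-1}$ (to commute the expectation past $\increasingw^{-1}$ via Jensen's inequality), and the classical fact that a scalar differential inequality of the form $\dot m\le-\increasingw^{-1}(m)+c$ yields a bound that is the sum of a $\classkl$ function of the initial value and a gain term in $c$, with the $\classkl$ part governed precisely by the halved dynamics~\eqref{eq:class-kl-beta-satisfies}.

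\emph{From the SDE to a scalar inequality.} First I would use Remark~\ref{re:convex-dissipativity-vs-exponential-dissipativity} to replace~\eqref{eq:theorem-second-hypothesis-VandW}--\eqref{eq:theorem-hypothesis-ito} by the equivalent dissipation inequality~\eqref{eq:theorem-dissipative-condition}, where $\increasingw^{-1}\in\classkinfty$ is convex. Then I would apply It\^o's formula~\eqref{eq:sde-lyapunov-function} to $\lyap$ along the solution stopped at $\tau_k\triangleq\inf\{t\ge t_0:\norm{x(t)}\ge k\}$; since $\grad\lyap(x(\cdot))\gs\Sigma(s)$ is continuous, hence locally bounded on $[t_0,t\wedge\tau_k]$, the stopped stochastic integral is a true martingale, so taking expectations and using~\eqref{eq:theorem-dissipative-condition} yields
\begin{align*}
  \expectationarg{\lyap(x(t\wedge\tau_k))}+\expectationarg{\int_{t_0}^{t\wedge\tau_k}\increasingw^{-1}(\lyap(x(s)))\,\ds}\le\lyap(x_0)+\int_{t_0}^{t}\gainnoise\big(\normfrob{\Sigma(s)}\big)\,\ds .
\end{align*}
Global existence of solutions (Assumption~\ref{ass:assumptions-SDE}) forces $\tau_k\to\infty$ almost surely, so letting $k\to\infty$ and combining monotone convergence with Fatou's lemma shows that $m(t)$ is finite (and, by a routine uniform integrability argument, continuous) and that the same inequality holds when $[t_0,t]$ is replaced by any subinterval $[t',t]\subseteq[t_0,\infty)$. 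Since $\increasingw^{-1}$ is convex and $\lyap(x(s))\ge0$, Jensen's inequality gives $\expectationarg{\increasingw^{-1}(\lyap(x(s)))}\ge\increasingw^{-1}(m(s))$, so with $c\triangleq\gainnoise\big(\max_{t_0\le s\le t}\normfrob{\Sigma(s)}\big)$ (finite, as $\Sigma$ is continuous) the function $m$ satisfies $D^{+}m(s)\le-\increasingw^{-1}(m(s))+c$ on $[t_0,t]$, in the sense of the upper-right Dini derivative.

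\emph{Comparison and the overshoot gain.} Put $\ell\triangleq\increasingw(2c)$, so $\increasingw^{-1}(\ell)=2c$. Whenever $m(s)\ge\ell$ one has $\increasingw^{-1}(m(s))\ge 2c$ and hence $D^{+}m(s)\le-\tfrac12\increasingw^{-1}(m(s))$; a continuity argument then shows that $m$ cannot rise back above $\ell$ once it has reached it. Therefore, if $m(t_0)\le\ell$ then $m(s)\le\ell$ for all $s\in[t_0,t]$, while if $m(t_0)>\ell$ then on the initial interval where $m>\ell$ the scalar comparison lemma (valid since $\increasingw^{-1}$ is locally Lipschitz on $(0,\infty)$) bounds $m(s)$ by $\muISStilde(m(t_0),s-t_0)$, and $m(s)\le\ell$ thereafter. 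In all cases $m(t)\le\muISStilde(m(t_0),t-t_0)+\ell$, and since $x_0$ is deterministic $m(t_0)=\lyap(x_0)$; substituting $\ell=\increasingw\big(2\gainnoise(\max_{t_0\le s\le t}\normfrob{\Sigma(s)})\big)$ gives~\eqref{eq:ISS-theorem}. Finally, I would check that~\eqref{eq:class-kl-beta-satisfies} indeed defines a $\classkl$ function: solutions exist by continuity of $\increasingw^{-1}$, are nonincreasing and bounded in $[0,r]$ hence global, and---because convexity with $\increasingw^{-1}(0)=0$ forces $\increasingw^{-1}(y)\le y\,\increasingw^{-1}(1)$ for $y\in[0,1]$, so $\int_0^1\big(\increasingw^{-1}(y)\big)^{-1}\mathrm{d}y=\infty$---the solution started from any $r>0$ remains strictly positive for all time, which together with monotone and continuous dependence on $r$ gives the $\classkl$ property. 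Concavity of $\increasingw$ is used exactly here and in the Jensen step.

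\emph{Main obstacle.} I expect the only genuinely delicate point to be the measure-theoretic bookkeeping in the first step: confirming that the passage $k\to\infty$ is legitimate, that $m(t)$ is finite and regular enough to sustain the Dini comparison, and that the stochastic-integral term has zero expectation on the localized interval. Everything afterwards is the standard deterministic comparison machinery; the one new feature relative to the exponential case of~\cite[Th.~4.1]{HD-MK-RJW:01} is that the overshoot gain $\muISStilde$ is described implicitly through the ODE~\eqref{eq:class-kl-beta-satisfies} rather than in closed form.
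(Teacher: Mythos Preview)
Your proposal is correct and follows essentially the same route as the paper: It\^o's formula with a localization by exit times, Fatou/monotone convergence to pass to the limit, Jensen's inequality (you apply it to the convex $\increasingw^{-1}$ directly, the paper to the concave $\increasingw$ via~\eqref{eq:theorem-second-hypothesis-VandW}, which is equivalent), and a scalar comparison yielding the $\classkl$ bound governed by~\eqref{eq:class-kl-beta-satisfies}. The only noteworthy difference is that the paper interposes the ODE solution $\lyapu$ of $\lyapudot=-\increasingw^{-1}(\lyapu)+\Input(t)$ and runs the ISS splitting on $\lyapu$, whereas you argue the splitting directly on $m$ via its Dini derivative; both work once continuity of $m$ is established, and you correctly flag that continuity (which the paper obtains by dominated convergence with the explicit majorant from the integral inequality, not uniform integrability) as the delicate step.
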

\begin{proof}
  Recall that Assumption~\ref{ass:assumptions-SDE} guarantees the
  global existence and uniqueness of solutions
  of~\eqref{eq:nonlinear-SDE-preliminaries}. Given the process
  $\{\lyap(x(t))\}_{t\ge t_0}$, the proof strategy is to obtain a differential inequality 
  for $\expectationarg{\lyap(x(t))}$ using It\^o formula~\eqref{eq:sde-lyapunov-function}, 
  and then use a
  comparison principle to translate the problem into one of standard
  input-to-state stability for an appropriate choice of the input. 
  %
  %
  
    To carry out this strategy, we consider It\^o formula~\eqref{eq:sde-lyapunov-function} with respect to an arbitrary reference time instant $t'\ge t_0$,
    \begin{align}\label{eq:sde-lyapunov-function-arbitrary-reference}
      \lyap(x(t)) & = \lyap(x(t')) +\int_{t'}^t\ito{\lyap}{x(s),s}\ds +
      \int_{t'}^t\grad\lyap(x(s))\tp\gs\Sigma(s)\db(s),
    \end{align}
    and we first ensure that the expectation
    of the integral against Brownian motion is~$0$.
 Let $S_k=\setdef{x\in\real^n}{\norm{x}\le k}$ be the ball of
  radius~$k$ centered at the origin. Fix $x_0\in\real^n$ and denote by
  $\tau_k$ the first exit time of~$x(t)$ from $S_k$ for integer
  values of $k$ greater than~$\norm{x(t_0)}$, namely,
  $\tau_k\triangleq\inf{\setdef{s\ge t_0}{\norm{x(s)}\ge k}}$,
  for $k>\lceil \norm{x(t_0)}\rceil$. 
    Since the event
    $\setdef{\omega\in\Omega}{\tau_k\le t}$ belongs to~$\filtrationt$ for each $t\ge t_0$, it 
    follows that
    $\tau_k$ is an $\filtrationadapted$-stopping time for each $t\ge t_0$.
      Now, for each~$k$
      fixed, if we consider the random variable
      $t_k(t)\triangleq\min\{t,\tau_k\}$  and
 define~$I(t',t)$ as the stochastic
    integral in~\eqref{eq:sde-lyapunov-function-arbitrary-reference} 
    for any fixed $t'\in [t_0,t_k(t)]$,
    then the process
    $I(t', t_k(t))$
   has zero expectation
  as we show next. The function $\map{X}{S_k\times[t',t]}{\real}$
  given by $X(x,s)\triangleq\grad\lyap(x)\tp \galone(x,s)\Sigma(s)$
  is essentially bounded (in its domain),
  and thus 
  $\expectationarg{\int_{t'}^t \indicator{[t', t_k(t)]}(s)\, X(x(s),t)^2 \,\ds}<\infty$, 
  where
  $\indicator{[t', t_k(t)]}(s)$ is the indicator function of the set
  $[t', t_k(t)]$. 
  Therefore, 
$\expectationarg{I(t', t_k(t))}=0$
  by~\cite[Th. 5.16, p. 26]{XM:11}. Define now
  $\lyapbar(t)\triangleq\expectationarg{\lyap(x(t))}$ and
  $\lyapwbar(t)\triangleq\expectationarg{\lyapw(x(t))}$ in
  $\Gamma(t_0)\triangleq\setdef{t\ge t_0}{\lyapbar(t)<\infty}$.
   By the above,
taking expectations in~\eqref{eq:sde-lyapunov-function-arbitrary-reference} 
  and
  using~\eqref{eq:theorem-hypothesis-ito}, we obtain that 
  \begin{align}\label{eq:differential-ineq-r}
    \lyapbar(t_k(t)) & = \lyapbar(t') + \expectation
    \Big[\,\int_{t'}^{t_k(t)}\ito{\lyap}{x(s),s}\ds\, \Big]\nonumber
    \\
    & \le \:\lyapbar(t') -
    \expectation\Big[\,\int_{t'}^{t_k(t)}\lyapw(x(s))\ds\, \Big] +
    \expectation\Big[\int_{t'}^{t_k(t)} \gainnoise(\normfrob{\Sigma(s)})\,\ds\,\Big]
  \end{align}
  for all $t\in\Gamma(t_0)$ and any $t'\in [t_0, t_k(t)]$. Next we use the fact that~$\lyap$ is
  continuous and $\{x(t)\}_{t\ge t_0}$ is also continuous with
  probability~$1$. In addition, according to Fatou's
  lemma~\cite[p. 123]{JNM-NAW:99} for convergence in the probability
  measure, we get that
  \begin{align}\label{eq:fatou-lyapbar}
    \lyapbar(t)=\,&\expectationarg{\,\lyap (x(\liminf_{k\to\infty}\,
      t_k(t)))\,} =\expectationarg{\,\liminf_{k\to\infty}\lyap (x(
      t_k(t)))\,}
    \\
    \le\,& \liminf_{k\to\infty}\,
    \expectationarg{\,\lyap(x(t_k(t)))\,}
    =\liminf_{k\to\infty}\lyapbar(t_k(t))\nonumber
  \end{align}
  for all $t\in\Gamma(t_0)$.
  Moreover, using the monotone convergence~\cite[p. 176]{JNM-NAW:99}
  when $k\to\infty$ in both Lebesgue integrals
  in~\eqref{eq:differential-ineq-r} (because both integrands are
  nonnegative and $\indicator{[t', t_k(t)]}$ converges monotonically
  to $\indicator{[t', t]}$ as $k\to\infty$ for any $t'\in [t_0, t_k(t)]$), we obtain
  from~\eqref{eq:fatou-lyapbar} that
  \begin{align}\label{eq:differential-ineq-beforeTonelli}
    \lyapbar(t)\le \lyapbar(t') -
    \expectation\Big[\,\int_{t'}^{t} \lyapw(x(s))\ds \,\Big]+
    \int_{t'}^{t}\gainnoise(\normfrob{\Sigma(s)})\,\ds
  \end{align}
   for all $t\in\Gamma(t_0)$ and any $t'\in[t_0,t]$.
  Before resuming the argument we make two observations. First,
  applying Tonelli's theorem~\cite[p. 212]{JNM-NAW:99} to the
  nonnegative process $\{\lyapw(x(s))\}_{s\ge t'}$, it follows that
  \begin{align}\label{eq:Tonelli_W}
  \expectation\big[\,\int_{t'}^{t} \lyapw(x(s))\ds\,\big]
    =\int_{t'}^{t} \lyapwbar(x(s))\ds.
  \end{align}
  Second, using~\eqref{eq:theorem-second-hypothesis-VandW} and
  Jensen's inequality~\cite[Ch. 3]{VSB:95}, 
  we get that 
  \begin{align}\label{eq:vbar-wbar-kinfcc}
    \lyapbar(t) = \expectationarg{\lyap(x(t))}
    \le\expectation\big[\increasingw(\lyapw(x(t)))\big]
    \le\increasingw\big(\expectationarg{\lyapw(x(t))}\big)
    =\increasingw\big(\lyapwbar(t)\big),
  \end{align}
  because~$\increasingw$ is concave, so
  $\lyapwbar(t)\ge{\increasingw}^{-1}(\lyapbar(t))$.
  Hence,~\eqref{eq:differential-ineq-beforeTonelli}
  and~\eqref{eq:Tonelli_W} yield
  \begin{align}\label{eq:integral-ineq-lyapbar}
    \lyapbar(t) &\:\le \lyapbar(t') -
    \int_{t'}^{t}\lyapwbar(s)\,\ds+
    \int_{t'}^{t}\gainnoise(\normfrob{\Sigma(s)})\,\ds 
    \nonumber
    \\&\:
    \le
    \lyapbar(t') +
    \int_{t'}^{t} \Big(-{\increasingw}^{-1}(\lyapbar(s))
    +\gainnoise(\normfrob{\Sigma(s)})\Big)\,\ds
  \end{align}
for all $t\in\Gamma(t_0)$ and any $t'\in [t_0, t]$,
 which in particular shows that $\Gamma(t_0)$ can be taken equal to $[t_0,\infty)$.
  
  Now the strategy is to compare~$\lyapbar$  with the unique solution of an ordinary differential 
  equation that represents an input-to-state stable (ISS) system. First we leverage the integral inequality~\eqref{eq:integral-ineq-lyapbar} to show that
  $\lyapbar$ is continuous in $[t_0,\infty)$, which allows us then to rewrite~\eqref{eq:integral-ineq-lyapbar} as a differential inequality at~$t'$. To 
  to show that~$\lyapbar$ is continuous, we use the dominated 
  convergence theorem~\cite[Thm. 2.3, P. 6]{XM:11} applied to
          $V_k(\hat{t})\triangleq\lyap(x(\hat{t}))-\lyap(x(\hat{t}+1/k))$, for $\hat{t}\in[t_0,t]$, and similarly taking $\hat{t}-1/k$  (excluding, respectively, the cases when $\hat{t}=t\,$ or $\,\hat{t}=t_0$). 
          The hypotheses are satisfied because $V_k$ can be majorized using~\eqref{eq:integral-ineq-lyapbar} as
         \begin{align}\label{eq:majorizing-function-lyapbar}
         \absolute{V_k(\hat{t})}
         \;\le\;\lyap(x(\hat{t}))+\lyap(x(\hat{t}+1/k))
         \; \le\;
                2\big( \lyap(x_0) +
                \int_{t_0}^{t} \gainnoise(\normfrob{\Sigma(s)})\,\ds\big),
         \end{align}
       where the term on the right is not a random variable and thus coincides with its expectation.
         Therefore, for every $\hat{t}\in[t_0, t]$,
          \begin{align*}
          \lim_{s\to \hat{t}}   
            \expectationarg{\lyap(x(s))}
            =\expectationarg{  \lim_{s\to \hat{t}}   \lyap(x(s))  }
            =\expectationarg{\lyap(x(\hat{t}))},
          \end{align*}
          so $\lyapbar$ is continuous on~$[t_0, t]$, for any $t\ge t_0$.
   Now, using again~\eqref{eq:integral-ineq-lyapbar} and the continuity of the integrand,
   we can bound the upper right-hand derivative~\cite[Appendix C.2]{HKK:02} 
   (also called upper Dini derivative), as
        \begin{align*}
        D^+\lyapbar(t')\triangleq &\, \limsup_{t\to t',\, t> t'}\frac{\lyapbar(t)-\lyapbar(t')}{t-t'}
        \\
        \le &\,
         \limsup_{t\to t',\, t> t'}\frac{1}{t-t'}\int_{t'}^t \big(-{\increasingw}^{-1}(\lyapbar(s))+\gainnoise(\normfrob{\Sigma(s)})\big)\,\ds
        \,=\,
        h(\lyapbar(t'),\Input(t')),
        \end{align*}
        for any $t'\in [t_0,\infty)$, where the function
                  $\map{h}{\realnonnegative\times\realnonnegative}{\real}$ is given by
                  \begin{align*}
                  h(y,\Input)\triangleq-{\increasingw}^{-1}(y)+\Input,
                  \end{align*}
                  and
        $\Input(t)\triangleq\gainnoise(\normfrob{\Sigma(t)})$, which is continuous in $[t_0,\infty)$.
    Therefore, according to the
          comparison principle~\cite[Lemma 3.4, P. 102]{HKK:02},
           using that $\lyapbar$ is continuous in $[t_0,\infty)$ and 
          $D^+\lyapbar(t')\le h(\lyapbar(t'),\Input(t'))$,  for any $t'\in [t_0,\infty)$,
          the solutions~\cite[Sec. C.2]{EDS:98} of the initial value problem
          \begin{align} \label{eq:comparison-principle-lemma-autoproper-implies-bounded-V} 
            \lyapudot(t) = h(\lyapu(t),\Input(t)),
            \qquad
            \lyapu_0\triangleq\lyapu(t_0)=\lyapbar(t_0)
          \end{align}
          (where~$h$ is locally Lipschitz in the first argument as we show next),
          satisfy that $\lyapu(t)\ge \lyapbar(t)\,(\ge 0)$ in the common interval of existence.         
   We argue 
   the global existence and uniqueness of 
  solutions
  of~\eqref{eq:comparison-principle-lemma-autoproper-implies-bounded-V}
  as follows.
  Since
  $\alpha\triangleq{\increasingw}^{-1}$ is convex and
  class~$\classkinfty$ (see Section~\ref{sec:pre-classks-convexity}),
  it holds that
  \begin{align*}
    \alpha(s')\le\alpha(s) \le
    \alpha(s')+\tfrac{\alpha(s'')-\alpha(s')}{s''-s'} (s-s')
  \end{align*}
  for all $s\in[s', s'']$, for any $s''>s'\ge 0$. Thus,
  $\absolute{\alpha(s)-\alpha(s')}=\alpha(s)-\alpha(s')\le L (s-s')$,
  for any $s''\ge s\ge s'\ge 0$, where $L\triangleq
  (\alpha(s'')-\alpha(s'))/(s''-s')$, so ${\increasingw}^{-1}$ is
  locally Lipschitz. Hence,~$h$ is locally Lipschitz in
  $\realnonnegative\times\realnonnegative$. 
  Therefore, given the
  input function~$\Input$ and any $\lyapu_0\ge 0$, there is a unique maximal
  solution
  of~\eqref{eq:comparison-principle-lemma-autoproper-implies-bounded-V}, denoted by
  $\lyapu(\lyapu_0, t_0; t)$, defined in a maximal interval $[t_0,
  t_{\text{max}}(\lyapu_0,t_0))$.  (As a by-product, the initial value
  problem~\eqref{eq:class-kl-beta-satisfies}, which can be written as
  $\dot{y}(s) = \tfrac{1}{2} h(y(s),0)$, $y(0)=r$, has a unique and
  strictly decreasing solution in~$[0,\infty)$, so~$\muISStilde$ in
  the statement is well defined and in class~$\classkl$.)
   To show
  that~\eqref{eq:comparison-principle-lemma-autoproper-implies-bounded-V}
  is ISS we follow a similar argument as in
  the proof of~\cite[Th. 5]{EDS:08} (and note that, as a consequence, we obtain that
  $t_{\text{max}}(\lyapu_0,t_0)=\infty$).  Firstly, if
  ${\increasingw}^{-1}(\lyapu) \ge 2\Input$, then $\lyapudot(t) =
  -\tfrac{1}{2}{\increasingw}^{-1}(\lyapu(t))$, which implies that~$\lyapu$ is nonincreasing outside the set
  $S\triangleq\setdef{t\ge t_0}{\lyapu(t)\le
    \increasingw(2\Input(t))}$. Thus, if some $t^*\ge t_0$ belongs to $S$, then so
  does every $t\in [t^*, t_{\text{max}}(\lyapu_0,t_0))$ implying that $\lyapu$ is 
  locally bounded because~$\Input$ is locally bounded (in fact, continuous). 
    (Note that $\lyapu(t)\ge 0$ because $\lyapudot(t)\ge 0$ whenever $\lyapu(t)=0$.)
  Therefore, for all $t\ge t_0$, and
   for~$\muISStilde$ as in the statement (which we have shown is well
   defined), we have that
   \begin{align*}
        \lyapbar(t)\,
               \le
     \lyapu(t)
     \le
     \max\Big\{\muISStilde\big(\lyapbar(t_0),t-t_0\big)\,,\,
     \increasingw\Big(2\max_{t_0\le s\le t}\Input(s)\Big)\Big\}.
   \end{align*}
   Since the maximum of two quantities is upper bounded by the sum, and
   using the definition of~$\Input$ together with the monotonicity
   of~$\gainnoise$, it follows that
   \begin{align}\label{eq:ISS-lyapu-to-make-comparison}
     \lyapbar(t)\,
            \le
      \lyapu(t)
      \le\muISStilde\big(\lyap(x_0),t-t_0\big) 
    + \increasingw\Big(2\gainnoise\Big(\max_{t_0\le s\le t}\normfrob{\Sigma(s)}\Big)\Big),
   \end{align}
   for all $t\ge t_0$,
   where we also used that~$\lyapbar(t_0)=\lyap(x_0)$, and the proof is complete. 
\end{proof}

Of particular interest to us is the case when the function $\lyap$ is
lower and upper bounded by class~$\classkinfty$ functions of the
distance to a closed, not necessarily bounded, set.

\begin{definition}\longthmtitle{NSS-Lyapunov
    functions}\label{def:Noise-to-state-Lyapunov}
  A function $\lyap \in \psdtwice$ is a \emph{strong NSS-Lyapunov
    function in probability with respect to}~$\Uset\subseteq\real^n$
  for~\eqref{eq:nonlinear-SDE-preliminaries} if $\lyap$ is a
  noise-dissipative Lyapunov function and, in addition, there exist
  $p>0$ and class~$\classkinfty$ functions $\gainNSSone$ and
  $\gainNSStwo$ such that
  \begin{align}\label{eq:theorem-third-hypothesis-V}
    \gainNSSone(\distU{x}^p) \le \lyap(x)\le\gainNSStwo(\distU{x}^p),
    \quad\forall x\in\real^n.
  \end{align}
  If, moreover, $\gainNSSone$ is convex, then $\lyap$ is a \emph{$p$th
    moment NSS-Lyapunov function with respect to}~$\Uset$. 
\end{definition}

Note that a strong NSS-Lyapunov function in probability with respect
to a set satisfies an inequality of the
type~\eqref{eq:theorem-third-hypothesis-V} for any $p>0$, whereas the
choice of~$p$ is relevant when $\gainNSSone$ is required to be convex.
The reason for the `strong' terminology is that we
require~\eqref{eq:theorem-dissipative-condition} to be satisfied with
convex $\increasingw^{-1}\in\classkinfty$. Instead, a standard
NSS-Lyapunov function in probability satisfies the same inequality
with a class~$\classkinfty$ function which is not necessarily convex.
We also note that~\eqref{eq:theorem-third-hypothesis-V} implies that
$\Uset=\setdef{x\in\real^n}{\lyap(x)=0}$, which is closed because
$\lyap$ is continuous.

\begin{example}\longthmtitle{Example~\ref{ex:example}--revisited:
   an NSS-Lyapunov function}\label{ex:example-two}
  {\rm Consider the function $\lyap$ introduced in
    Example~\ref{ex:example}.  For each $p\in(0,2]$, note that
    \begin{align*} 
      {\gainNSSone}_p(\norm{x-x^*}^p) \le
      \lyap(x)\le{\gainNSStwo}_p(\norm{x-x^*}^p)\;\;\quad\forall
      x\in\real^n,
    \end{align*}
    for the convex
    functions~${\gainNSSone}_p(r)={\gainNSStwo}_p(r)\triangleq
    r^{2/p}$, which are in the class~$\classkinfty$. (Recall that
    $\gainNSStwo$ in Definition~\ref{def:Noise-to-state-Lyapunov} is
    only required to be~$\classkinfty$.) Thus, the function~$\lyap$ is
    a $p$th moment NSS-Lyapunov function
    for~\eqref{eq:inexact-distributed-opt} with respect to~$x^*$ for
    $p\in(0,2]$. \oprocend}
\end{example}

The notion of NSS-Lyapunov function plays a key role in establishing
our main result on the stability of SDEs with persistent
noise.

\begin{corollary}\longthmtitle{The existence of an NSS-Lyapunov
    function implies the corresponding
    NSS property}\label{co:of-the-main-theorem} 
  Under Assumption~\ref{ass:assumptions-SDE}, 
    and further assuming that~$\Sigma$ is continuous,
  given a closed
  set~$\Uset \subset \real^n$,
  \begin{enumerate}
  \item if $\lyap \in \psdtwice$ is a strong NSS-Lyapunov function in
    probability with respect to~$\Uset$
    for~\eqref{eq:nonlinear-SDE-preliminaries}, then the system is NSS
    in probability with respect to~$\Uset$ with gain functions
    \begin{align}\label{eq:gains-NSS}
      \mu(r,s) \:\triangleq\gainNSSone^{-1}\big(\tfrac{2}{\eps}
      \muISStilde(\gainNSStwo(r^p),s ) \big),
      \quad\theta(r)\: \triangleq\gainNSSone^{-1}
      \big(\tfrac{2}{\eps}\increasingw(2\gainnoise(r))\big);
    \end{align}
    
  \item if $\lyap \in \psdtwice$ is a $p$thNSS-Lyapunov function with
    respect to $\Uset$ for~\eqref{eq:nonlinear-SDE-preliminaries},
    then the system is $p$th~moment NSS with respect to~$\Uset$ with
    gain functions $\mu$ and $\theta$ as in~\eqref{eq:gains-NSS}
    setting $\epsilon=1$.
  \end{enumerate}
\end{corollary}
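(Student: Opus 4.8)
The plan is to derive both statements directly from Theorem~\ref{th:Stability-Non-Linear-Systems}. Indeed, an NSS-Lyapunov function (strong in probability, or $p$th~moment) is in particular a noise-dissipative Lyapunov function, so that theorem already provides an expectation bound on $\lyap(x(t))$; it then only remains to transfer this bound to $\distU{x(t)}^p$ via the two-sided estimate~\eqref{eq:theorem-third-hypothesis-V}, using Markov's inequality for part~(i) and Jensen's inequality (which requires the convexity of $\gainNSSone$) for part~(ii). As a common preliminary step I would apply Theorem~\ref{th:Stability-Non-Linear-Systems}, bound $\lyap(x_0)\le\gainNSStwo(\distU{x_0}^p)$ using the upper inequality in~\eqref{eq:theorem-third-hypothesis-V} and the monotonicity of $r\mapsto\muISStilde(r,s)$ (valid since $\muISStilde\in\classkl$), and replace the $\esssup$ by a $\max$ (legitimate because $\Sigma$ is continuous on the compact interval $[t_0,t]$); writing $M\triangleq\esssup_{t_0\le s\le t}\normfrob{\Sigma(s)}$ this gives, for all $t\ge t_0$,
\begin{align*}
  \expectationarg{\lyap(x(t))}\;\le\;\muISStilde\big(\gainNSStwo(\distU{x_0}^p),t-t_0\big)+\increasingw\big(2\gainnoise(M)\big)\;=:\;E_1+E_2 ,
\end{align*}
a finite quantity; and by the definition~\eqref{eq:gains-NSS} of the gains one has $\gainNSSone(\mu(\distU{x_0},t-t_0))=\tfrac{2}{\eps}E_1$ and $\gainNSSone(\theta(M))=\tfrac{2}{\eps}E_2$.

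For part~(i), fixing $\eps>0$ and the gains $\mu,\theta$ from~\eqref{eq:gains-NSS}, I would argue that on the event $\{\distU{x(t)}^p>\mu(\distU{x_0},t-t_0)+\theta(M)\}$ the lower inequality in~\eqref{eq:theorem-third-hypothesis-V} and the strict monotonicity of $\gainNSSone$ force $\lyap(x(t))>\gainNSSone\big(\mu(\distU{x_0},t-t_0)+\theta(M)\big)$, and that, since $\gainNSSone$ is increasing with nonnegative arguments,
\begin{align*}
  \gainNSSone\big(\mu(\distU{x_0},t-t_0)+\theta(M)\big)\;\ge\;\max\big\{\tfrac{2}{\eps}E_1,\,\tfrac{2}{\eps}E_2\big\}\;\ge\;\tfrac{1}{\eps}\,(E_1+E_2)\;\ge\;\tfrac{1}{\eps}\,\expectationarg{\lyap(x(t))}.
\end{align*}
Markov's inequality applied to the nonnegative variable $\lyap(x(t))$ then yields $\probability\{\distU{x(t)}^p>\mu(\distU{x_0},t-t_0)+\theta(M)\}\le\eps$, which is~\eqref{eq:def-ISS-probability}; the degenerate case $\expectationarg{\lyap(x(t))}=0$ is handled directly, since then $\distU{x(t)}=0$ almost surely. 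Finally I would verify, via the composition rules recalled in Section~\ref{sec:pre-classks-convexity} and the fact that $r\mapsto r^p$ is $\classkinfty$, that $\mu\in\classkl$ and $\theta\in\classk$.

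For part~(ii), the convexity of $\gainNSSone$ lets me use Jensen's inequality together with the lower inequality in~\eqref{eq:theorem-third-hypothesis-V} to get $\gainNSSone\big(\expectationarg{\distU{x(t)}^p}\big)\le\expectationarg{\gainNSSone(\distU{x(t)}^p)}\le\expectationarg{\lyap(x(t))}\le E_1+E_2$ (which incidentally forces $\expectationarg{\distU{x(t)}^p}<\infty$); applying $\gainNSSone^{-1}$ and using $E_1+E_2\le 2\max\{E_1,E_2\}$ with the monotonicity of $\gainNSSone^{-1}$ gives
\begin{align*}
  \expectationarg{\distU{x(t)}^p}\;\le\;\gainNSSone^{-1}(E_1+E_2)\;\le\;\max\big\{\gainNSSone^{-1}(2E_1),\,\gainNSSone^{-1}(2E_2)\big\}\;\le\;\gainNSSone^{-1}(2E_1)+\gainNSSone^{-1}(2E_2),
\end{align*}
whose right-hand side is precisely $\mu(\distU{x_0},t-t_0)+\theta(M)$ for the gains~\eqref{eq:gains-NSS} evaluated at $\eps=1$; this is~\eqref{eq:def-ISS-expectation}.

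Since all the substantive work is carried by Theorem~\ref{th:Stability-Non-Linear-Systems}, I do not expect a genuine obstacle here; the points needing a little care are the two elementary comparison-function estimates above — the step $\max\{a,b\}\ge\tfrac12(a+b)$ that turns the factor $2/\eps$ in~\eqref{eq:gains-NSS} into the exact threshold $\eps$ in part~(i), and the sub-additivity-type bound $\gainNSSone^{-1}(E_1+E_2)\le\gainNSSone^{-1}(2E_1)+\gainNSSone^{-1}(2E_2)$ in part~(ii) — together with the replacement of $\esssup$ by $\max$ and the degenerate case $\expectationarg{\lyap(x(t))}=0$.
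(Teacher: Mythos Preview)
Your proposal is correct and follows essentially the same route as the paper: apply Theorem~\ref{th:Stability-Non-Linear-Systems}, use the upper bound in~\eqref{eq:theorem-third-hypothesis-V} to replace $\lyap(x_0)$ by $\gainNSStwo(\distU{x_0}^p)$, then invoke Markov/Chebyshev for part~(i) and Jensen for part~(ii). The only cosmetic difference is that the paper packages the elementary comparison step as the single inequality $\alpha(2r)+\alpha(2s)\ge\alpha(r+s)$ for $\alpha\in\classk$ (applied with $\alpha=\gainNSSone^{-1}$ to an intermediate threshold $\hat\rho$), whereas you obtain the same effect via $\max\{a,b\}\ge\tfrac12(a+b)$ and monotonicity.
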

\begin{proof}
  To show \emph{(i)}, note that, since $\gainNSSone(\distU{x}^p) \le
  \lyap(x)$ for all $x\in\real^n$, with $\gainNSSone\in\classkinfty$,
  it follows that for any $\hat{\rho}>0$ and $t\ge t_0$,
  \begin{align}\label{eq:corollary-bounded-probability}
    \probability\Big\{\distU{x(t)}^p > \hat{\rho}\Big\} & =
    \probability\Big\{\gainNSSone(\distU{x(t)}^p) >
    \gainNSSone(\hat{\rho}) \Big\}
    \le\probability\Big\{\lyap(x(t)) > \gainNSSone(\hat{\rho})\Big\}
    \le
    \frac{\expectationarg{\lyap(x(t))}}{\gainNSSone(\hat{\rho})}\nonumber
    \\
    & \le \frac{1}{\gainNSSone(\hat{\rho})}
    \bigg(\muISStilde\Big(\gainNSStwo(\distU{x_0}^p),\,t-t_0\Big)
    +\increasingw\Big(2\gainnoise \big(\max_{t_0\le s\le
      t}\normfrob{\Sigma(s)}\big)\Big)\bigg),
  \end{align}
  where we have used the strict monotonicity of $\gainNSSone$ in the
  first equation, Chebyshev's inequality~\cite[Ch. 3]{VSB:95} in the
  second inequality, and the upper bound for
  $\expectationarg{\lyap(x(t))}$ obtained in
  Theorem~\ref{th:Stability-Non-Linear-Systems},
  cf.~\eqref{eq:ISS-theorem}, in the last inequality (leveraging the
  monotonicity of $\muISStilde$ in the first argument and the fact
  that $\lyap(x)\le\gainNSStwo(\distU{x}^p)$ for all
  $x\in\real^n$). Also, for any function $\alpha\in\classk$, we have
  that $\alpha(2r)+\alpha(2s)\ge\alpha(r+s)$ for all $r,s\ge 0$. Thus,
  \begin{align}\label{eq:rhos-for-probability}
    \rho(\eps,x_0,t) & \triangleq\mu\big(\distU{x_0},t-t_0\big)
    +\theta\Big(\max_{t_0\le s\le t}\normfrob{\Sigma(s)}\Big)
    \\
    & \ge\gainNSSone^{-1}
    \Bigg(\frac{1}{\eps}\muISStilde\Big(\gainNSStwo(\distU{x_0}^p) ,
    t-t_0\Big) + \frac{1}{\eps}\increasingw\Big(2\gainnoise
    \big(\max_{t_0\le s\le t}\normfrob{\Sigma(s)}\big)\Big)\Bigg)
    \triangleq\hat{\rho}(\eps). \nonumber
  \end{align}
  Substituting now $\hat{\rho}\triangleq\hat{\rho}(\eps)$
  in~\eqref{eq:corollary-bounded-probability}, and
  using that $\rho(\eps,x_0,t)\ge \hat{\rho}(\eps)$,
  we get that
  $\probability\big\{\distU{x(t)}^p >
  \rho(\eps,x_0,t)\big\}\le\probability\big\{\distU{x(t)}^p >
  \hat{\rho}(\eps)\big\}\le\eps$.

  To show \emph{(ii)}, since $\gainNSSone^{-1}$ is concave, applying Jensen's
  inequality~\cite[Ch. 3]{VSB:95}, we get
  \begin{align*} 
    \expectationarg{\distU{x(t)}^p}
    \:\le\expectation\big[\gainNSSone^{-1}
    \big(\lyap(x(t))\big)\big]\le\gainNSSone^{-1}
    \Big(\expectationarg{\lyap(x(t))} \Big)
    \le\hat{\rho}(1)\le\rho(1,x_0,t),
  \end{align*}
  where in the last two inequalities we have used the bound for
  $\expectationarg{\lyap(x(t))}$
  in~\eqref{eq:corollary-bounded-probability} and the definition of
  $\hat{\rho}(\eps)$ in~\eqref{eq:rhos-for-probability}.
\end{proof}

\begin{figure*}[bth]
  \centering
  \subfigure[]{\includegraphics[width=.496\linewidth]{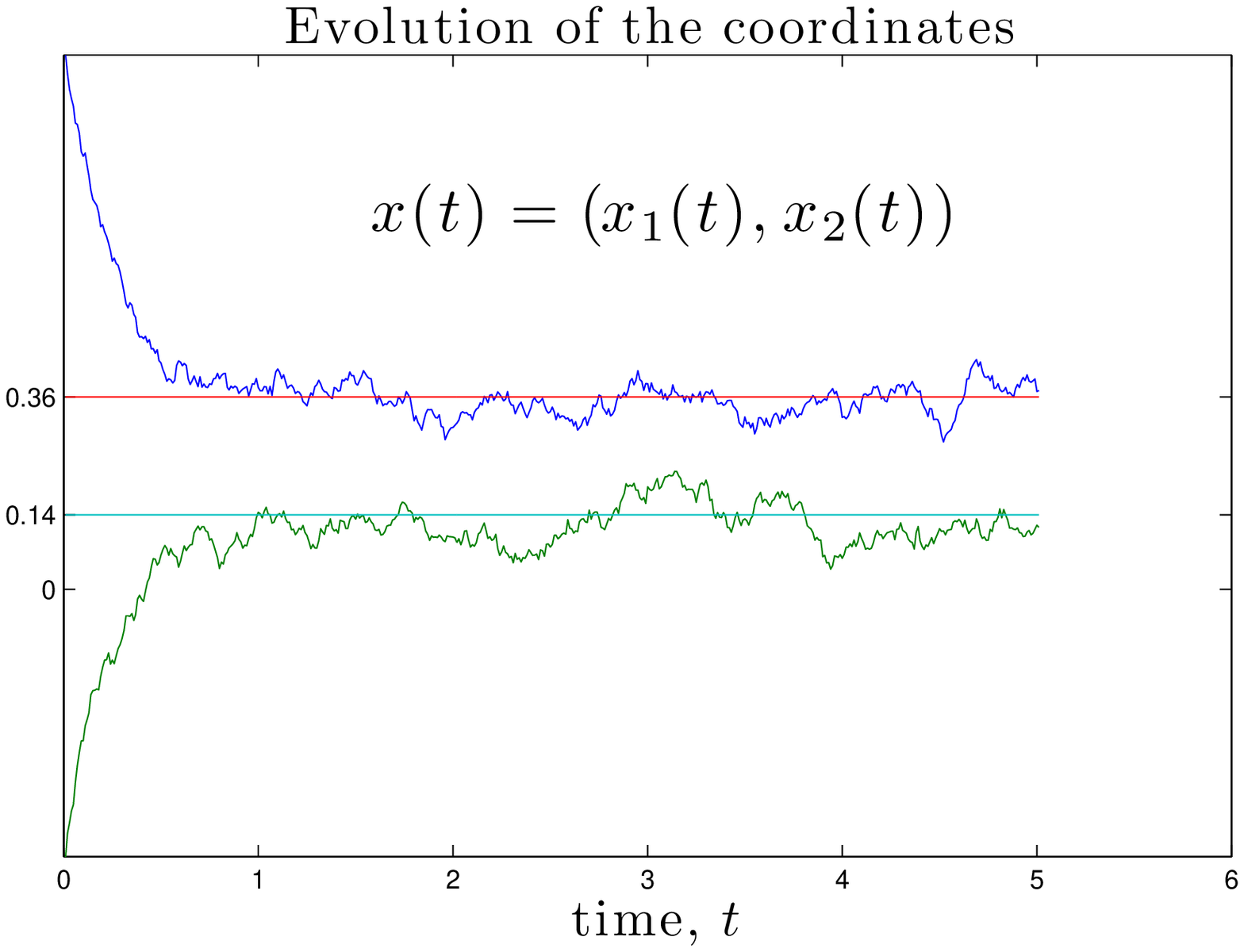}}
  \subfigure[]{\includegraphics[width=.496\linewidth]{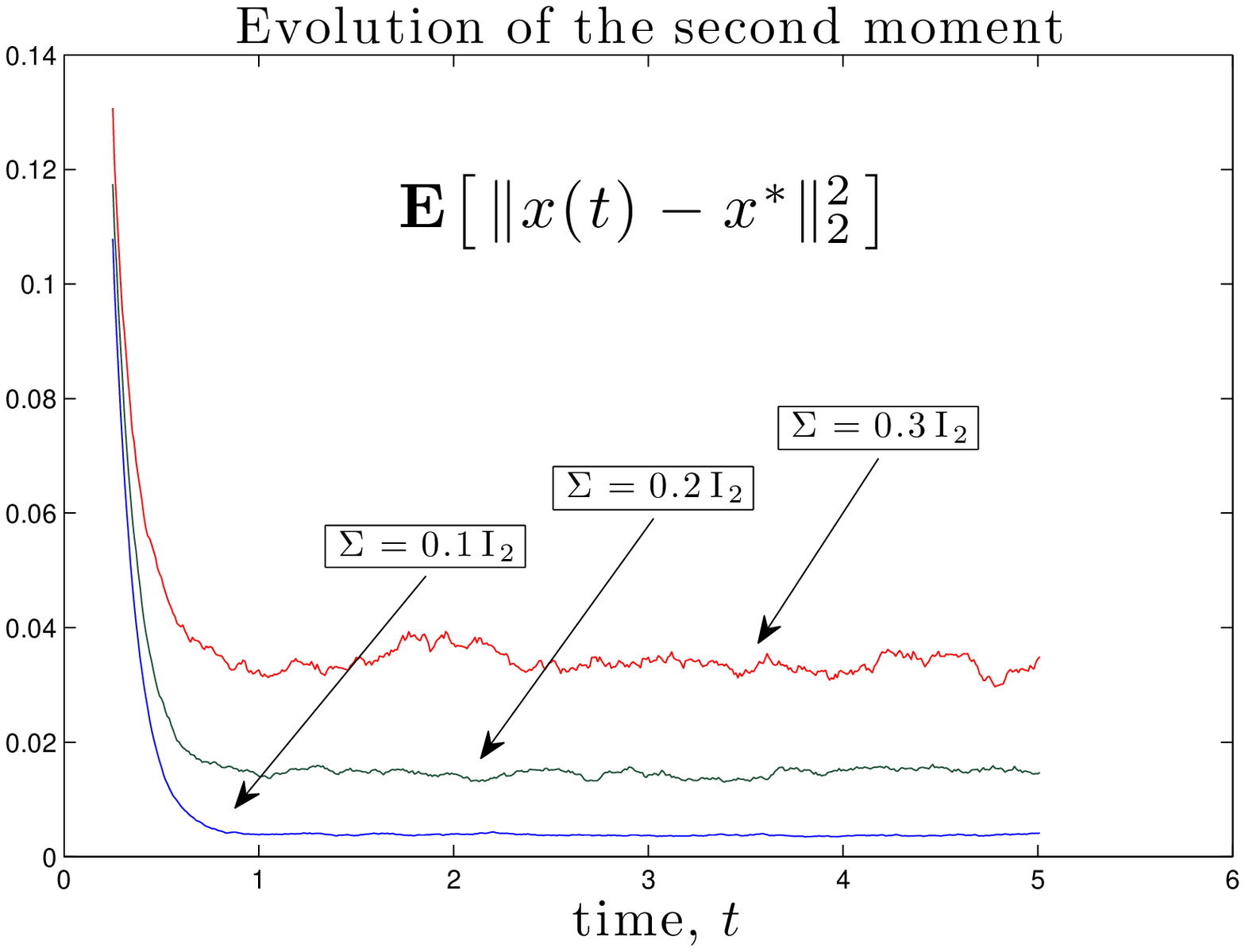}}
  \caption{Evolution of the
    dynamics~\eqref{eq:inexact-distributed-opt} with $\lap=0$,
    \,$\convexf(x_1,x_2)=\log\big(e^{(x_1-2)} + e^{(x_2+1)}\big) + 0.5
    (x_1+x_2-1)^2 + (x_1-x_2)^2$, and initial condition $[x_1(0),
    x_2(0)]=(1,-0.5)$. Since $\convexf$ is a sum of convex functions,
    and the Hessian of the quadratic part of $\convexf$ has
    eigenvalues $\{2, 4\}$, we can take $\gamma$ given by
    $\gamma(r)=2\, r$, for $r\ge 0$.  Plot (a) shows the evolution of
    the first and second coordinates with $\Sigma=0.1\,\identity_2$.
    Plot (b) illustrates the noise-to-state stability property in
    second moment with respect to~$x^*=(0.36, 0.14)$, where the matrix
    $\Sigma(t)$ is a constant multiple of the identity.  (The
    expectation is computed averaging over $500$ realizations of the
    noise.)}\label{fig:simulation}
\end{figure*}
    
\begin{example}\longthmtitle{Example~\ref{ex:example}--revisited:
    illustration of %
    Corollary~\ref{co:of-the-main-theorem}}
  {\rm Consider again Example~\ref{ex:example}. Since $\lyap$ is a
    $p$th moment NSS-Lyapunov function
    for~\eqref{eq:inexact-distributed-opt} with respect to the
    point~$x^*$ for $p\in(0,2]$, as shown in
    Example~\ref{ex:example-two},
    Corollary~\ref{co:of-the-main-theorem} implies that
    \begin{align}\label{eq:example-NSS-expectation}
      \expectationarg{\norm{x-x^*}^p} & \le \mu\big(\norm{x_0-x^*},
      t-t_0\big) + \theta\Big(\max_{t_0\le s\le
        t}\normfrob{\Sigma(s)}\Big) ,
    \end{align}
    for all $t\ge t_0$, $x_0\in\real^n$, and $p\in(0,2]$,
    where
    \begin{align*} 
      \mu(r,s) \:=\big(2 \muISStilde(r^2,s ) \big)^{p/2},
      \quad\theta(r)\: = \big(\gamma^{-1}(r^2)\big)^{p/2},
    \end{align*}
    and the class~$\classkl$ function $\muISStilde$ is defined as the
    solution to the initial value
    problem~\eqref{eq:class-kl-beta-satisfies} with
    $\increasingw(r)=\tfrac{1}{2}\gamma^{-1}(r)$.
    Figure~\ref{fig:simulation} illustrates this noise-to-state
    stability property.  We note that if the function $\convexf$ is
    strongly convex, i.e., if $\gamma(r)=c_{\gamma}\,r$ for some
    constant $c_{\gamma}>0$, then $\map{\muISStilde}{\realnonnegative
      \times \realnonnegative}{\realnonnegative}$ becomes
    $\muISStilde(r, s)=r e^{-c_{\gamma}s}$, and $\muISS(r,
    s)=2^{p/2}\,r^p e^{-c_{\gamma} p/2\, s}$, so the bound for
    $\expectationarg{\norm{x-x^*}^p}$
    in~\eqref{eq:example-NSS-expectation} decays exponentially with
    time to~$\theta\big(\max_{t_0\le s\le
      t}\normfrob{\Sigma(s)}\big)$.\oprocend}
\end{example}

\section{Refinements of the notion of proper
  functions}\label{sec:positive semidefinite-functions}

In this section, we analyze in detail the inequalities between
functions that appear in the definition of noise-dissipative Lyapunov
function, strong NSS-Lyapunov function in probability, and
$p$th~moment NSS-Lyapunov function.
In Section~\ref{sec:equivalent-classes}, we establish that these
inequalities can be regarded as equivalence relations. In
Section~\ref{sec:characterizations}, we make a complete
characterization of the properties of two functions related by these
equivalence relations. Finally, in Section~\ref{sec:alternative},
these results lead us to obtain an alternative formulation of
Corollary~\ref{co:of-the-main-theorem}.

\subsection{Proper functions and equivalence
  relations}\label{sec:equivalent-classes}

Here, we provide a refinement of the notion of proper functions with
respect to each other. Proper functions play an important role in
stability analysis, see e.g.,~\cite{HKK:02,EDS:08}.

\begin{definition}\longthmtitle{Refinements of the notion of proper
    functions with respect to each
    other}\label{def:kinf-proper-wrt-each-other}
  Let $\dom\subseteq\real^n$ and the functions $\map{\lyap,
    \lyapw}{\dom}{\realnonnegative}$ be such that
  \begin{align*} 
    \alpha_1 (\lyapw(x)) \le \lyap(x) \le \alpha_2 (\lyapw(x)), \quad
    \forall x \in \dom,
  \end{align*}
  for some functions $\map{\alpha_1,\,
    \alpha_2}{\realnonnegative}{\realnonnegative}$. Then,
  \begin{enumerate}
  \item if $\alpha_1,\alpha_2\in\classk$, we say that $\lyap$ is
    $\classk$\,-\,\emph{dominated by}~$\lyapw$ in $\dom$, and write
    $\linebreak\lyap\proper\lyapw\;\;\text{in}\;\;\dom$;
    
  \item if $\alpha_1,\alpha_2\in\classkinfty$, we say that $\lyap$ and
    $\lyapw$ are $\classkinfty$\,-\,\emph{proper with respect to each
      other in} $\dom$, and write $
    \lyap\properinfty\lyapw\;\;\text{in}\;\;\dom$;
    
  \item if $\alpha_1,\alpha_2\in\classkinfty$ are convex and concave,
    respectively, we say that $\lyap$ and $\lyapw$ are
    $\classkinftyccp$\,-\,\emph{(convex-concave) proper with respect
      to each other in} $\dom$, and write $ \lyap
    \properinftyccp\lyapw\;\;\text{in}\;\;\dom$;
    
  \item if $\alpha_1(r)\triangleq c_{\alpha_1} r$ and
    $\alpha_2(r)\triangleq c_{\alpha_2} r$, 
    for some constants
    $c_{\alpha_1}, c_{\alpha_2} > 0$, we say that $\lyap$ and $\lyapw$
    are \emph{equivalent in $\dom$}, and write 
    $\lyap\relationconstants\lyapw\;\;\text{in}\;\;\dom$.
  \end{enumerate}
\end{definition}

Note that the relations in
Definition~\ref{def:kinf-proper-wrt-each-other} are nested, i.e.,
given $\map{\lyap, \lyapw}{\dom}{\realnonnegative}$, the following
chain of implications hold in~$\dom$\,:
\begin{align}\label{eq:chain-relations}
  \lyap\relationconstants\lyapw \,\Rightarrow\,
  \lyap\properinftyccp\lyapw\,\Rightarrow\,\lyap
  \properinfty\lyapw\,\Rightarrow\,\lyap\proper\lyapw.
\end{align}
Also, note that if $\lyapw(x)=\norm{x}$,
$\dom$ is a neighborhood of~$0$, and $\alpha_1, \alpha_2$ are
class~$\classk$, then we recover the notion of $\lyap$ being a
\emph{proper} function~\cite{HKK:02}.  If $\dom=\real^n$, and $\lyap$
and $\lyapw$ are seminorms, then the relation~$\relationconstants$
corresponds to the concept of equivalent seminorms. 

The relation~$\properinfty$ is relevant for ISS and NSS in
probability, whereas the relation~$\properinftyccp$ is important for
$p$th moment NSS.  The latter is because the inequalities in
$\properinftyccp$ are still valid, thanks to Jensen inequality, if we
substitute $\lyap$ and $\lyapw$ by their expectations along a
stochastic process.  Another fact about the relation~$\properinftyccp$
is that $\alpha_1,\alpha_2\in\classkinfty$, convex and concave,
respectively, must be asymptotically linear if
$\lyap(\dom)\supseteq[s_0,\infty)$, for some $s_0\ge 0$, so that
$\alpha_1(s)\le\alpha_2(s)$ for all $s\ge s_0$. This follows from
Lemma~\ref{le:convex-concave}.

\begin{remark}\longthmtitle{Quadratic forms in a constrained
    domain}\label{re:another-set-assumptions}
  {\rm It is sometimes convenient to view the functions $\map{\lyap,
      \lyapw}{\dom}{\realnonnegative}$ as defined in a
    domain where their
    functional expression becomes simpler.  To make this idea precise,
    assume there exist $\map{i}{\dom \subset \real^n}{\real^m}$, with
    $m \ge n$, and 
    $\map{\lyaphat, \lyapwhat}{\hat{\dom}}{\realnonnegative}$, where
    $\hat{\dom}=i(\dom)$, such that $\lyap =\lyaphat \circ i $ and
    $\lyapw =\lyapwhat \circ i$.  If this is the case, then the
    existence of $\alpha_1,\,
    \alpha_2:\realnonnegative\to\realnonnegative$ such that
    $\alpha_1\big(\lyapwhat(\hat{x})\big) \le
    \lyaphat(\hat{x})\le\alpha_2\big(\lyapwhat(\hat{x})\big)$, for all
    $\hat{x}\in\hat{\dom}$, implies that
    $\alpha_1\big(\lyapw({x})\big)\le\lyap({x})
    \le\alpha_2\big(\lyapw({x})\big)$, for all $x\in\dom$. The reason
    is that for any $x\in\dom$ there exists $\hat{x}\in\hat{\dom}$,
    given by $\hat{x}=i(x)$, such that $\lyap(x)=\lyaphat(\hat{x})$
    and $\lyapw(x)=\lyapwhat(\hat{x})$, so
    \begin{align*}
      \alpha_1\big(\lyapw(x)\big) =
      \alpha_1\big(\lyapwhat(\hat{x})\big)\le\lyap(x) =
      \lyaphat(\hat{x})\le\alpha_2\big(\lyapwhat(\hat{x})\big) =
      \alpha_2\big(\lyapw(x)\big).
    \end{align*}
    Consequently, if any of the relations given in
    Definition~\ref{def:kinf-proper-wrt-each-other} is satisfied by
    $\lyaphat$, $\lyapwhat$ in $\hat{\dom}$, then the corresponding
    relation is satisfied by $\lyap$, $\lyapw$ in $\dom$. For
    instance, in some scenarios this procedure can allow us to rewrite
    the original functions $\lyap$, $\lyapw$ as quadratic forms
    $\lyaphat$, $\lyapwhat$ in a constrained set of an extended
    Euclidean space, where it is easier to establish the appropriate
    relation between the functions.  We make use of this observation
    in Section~\ref{sec:alternative} below.}\oprocend
\end{remark}

\begin{lemma}\longthmtitle{Powers
    of seminorms with the same
    nullspace}\label{le:kinf-proper-semidefinite-quadratic-forms}
  Let $A$ and $B$ in $\realmatricesrectangulararg{m}{n}$ be nonzero
  matrices with the same nullspace, $\kernel(A) = \kernel(B)$. Then,
  for any $p,q>0$, the inequalities
  $\alpha_1\big(\seminorm{x}{A}^p\big) \le \seminorm{x}{B}^q \le
  \alpha_2\big(\seminorm{x}{A}^p\big)$ are verified with
  \begin{align*}
    \alpha_1(r)\triangleq\Big(\frac{\lambda_{n-k}(\Bt
      B)}{\lambdamax(\At A)}\Big)^{\frac{q}{2}}\,
    r^{q/p};\quad\alpha_2(r)\triangleq \Big(\frac{\lambdamax(\Bt
      B)}{\lambda_{n-k}(\At A)}\Big)^{\frac{q}{2}}\, r^{q/p},
  \end{align*}
  where $k\triangleq\dim (\kernel(A))$.  In particular,
  $\seminorm{.}{A}^p\relationconstants\seminorm{.}{B}^p$\, and\,
  $\seminorm{.}{A}^p\properinfty\seminorm{.}{B}^q$\, in\, $\real^n$
  for any real numbers $p,q>0$.
\end{lemma}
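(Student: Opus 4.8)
The plan is to reduce both seminorms to the Euclidean norm of the projection of $x$ onto the common orthogonal complement of the shared nullspace, using the spectral decomposition of the Gram matrices $\At A$ and $\Bt B$, and then to eliminate that projected norm between the two resulting two-sided bounds.

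First I would record the elementary identity $\seminorm{x}{A}^2=\norm{Ax}^2=x\tp\At A\,x$, and note that $\At A$ is symmetric positive semidefinite with $\kernel(\At A)=\kernel(A)$; since $A\neq 0$ we have $k\triangleq\dim(\kernel(A))<n$, so by the ordering convention of Section~\ref{sec:notation} the quantity $\lambda_{n-k}(\At A)$ is the minimum \emph{nonzero} eigenvalue and is strictly positive. Let $x_\perp$ denote the orthogonal projection of $x$ onto $\kernel(A)^{\bot}=\range(\At A)$. Because $\At A$ annihilates $\kernel(A)$, one has $x\tp\At A\,x=x_\perp\tp\At A\,x_\perp$, and restricting to the invariant subspace $\kernel(A)^{\bot}$ — on which the eigenvalues of $\At A$ lie between $\lambda_{n-k}(\At A)$ and $\lambdamax(\At A)$ — the Rayleigh-quotient estimates give
\begin{align*}
  \lambda_{n-k}(\At A)\,\norm{x_\perp}^2 \;\le\; \seminorm{x}{A}^2 \;\le\; \lambdamax(\At A)\,\norm{x_\perp}^2 .
\end{align*}

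The key point is that the hypothesis $\kernel(A)=\kernel(B)$ forces $\kernel(B)^{\bot}=\kernel(A)^{\bot}$, so the \emph{same} vector $x_\perp$ governs $B$ (and $\dim(\kernel(B))=k$ as well); the identical argument yields
\begin{align*}
  \lambda_{n-k}(\Bt B)\,\norm{x_\perp}^2 \;\le\; \seminorm{x}{B}^2 \;\le\; \lambdamax(\Bt B)\,\norm{x_\perp}^2 .
\end{align*}
Eliminating $\norm{x_\perp}^2$ between the two displays — the upper bound on $\seminorm{x}{A}^2$ gives $\norm{x_\perp}^2\ge\seminorm{x}{A}^2/\lambdamax(\At A)$ and the lower bound gives $\norm{x_\perp}^2\le\seminorm{x}{A}^2/\lambda_{n-k}(\At A)$ — produces
\begin{align*}
  \frac{\lambda_{n-k}(\Bt B)}{\lambdamax(\At A)}\,\seminorm{x}{A}^2 \;\le\; \seminorm{x}{B}^2 \;\le\; \frac{\lambdamax(\Bt B)}{\lambda_{n-k}(\At A)}\,\seminorm{x}{A}^2 .
\end{align*}
Raising all three terms to the power $q/2>0$ (monotone on $\realnonnegative$) and writing $\seminorm{x}{A}^q=\big(\seminorm{x}{A}^p\big)^{q/p}$ gives exactly the asserted inequalities with $\alpha_1,\alpha_2$ as in the statement.

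For the ``in particular'' clause, both $\alpha_1(r)=c_{\alpha_1}r^{q/p}$ and $\alpha_2(r)=c_{\alpha_2}r^{q/p}$ have $c_{\alpha_1},c_{\alpha_2}>0$ — here I use that all four eigenvalues appearing are strictly positive precisely because $A,B$ are nonzero — and $q/p>0$, so $\alpha_1,\alpha_2\in\classkinfty$, which by Definition~\ref{def:kinf-proper-wrt-each-other}(ii) gives $\seminorm{.}{A}^p\properinfty\seminorm{.}{B}^q$ in $\real^n$; taking $q=p$ makes $\alpha_1,\alpha_2$ linear, so Definition~\ref{def:kinf-proper-wrt-each-other}(iv) gives $\seminorm{.}{A}^p\relationconstants\seminorm{.}{B}^p$. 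I expect the only mildly delicate point to be spelling out the Rayleigh-quotient bounds on the restricted subspace — namely that $x\tp\At A\,x$ depends on $x$ only through $x_\perp$ and that the eigenvalues relevant on $\kernel(A)^{\bot}$ are exactly those indexed $1$ through $n-k$ — but this is standard linear algebra via the spectral theorem, and the rest is bookkeeping.
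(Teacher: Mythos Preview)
Your proof is correct and follows essentially the same approach as the paper: both arguments project $x$ orthogonally onto $\kernel(A)^\bot=\kernel(B)^\bot$, use Rayleigh-quotient bounds with the minimum nonzero and maximum eigenvalues of $\At A$ and $\Bt B$ on that subspace, and then read off the claimed $\alpha_1,\alpha_2$. The only cosmetic difference is that the paper writes the result as a single chain of inequalities with $\alpha_1,\alpha_2$ applied directly, whereas you first isolate the two-sided bounds in terms of $\norm{x_\perp}^2$ and then eliminate it.
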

\begin{proof}
  For $\Uset\triangleq\kernel(A)$,
  write any $x\in\real^n$ as $x=\xu+\xuperpendicular$, where
  $\xu\in\Uset$ and $\xuperpendicular\in\setdef{x\in\real^n}{x\tp
    u=0\;, \forall u\in\Uset}$, so that
  $Ax=A(\xu+\xuperpendicular)=A\xuperpendicular$ and
  $Bx=B\xuperpendicular$ because $\kernel(A)=\kernel(B)=\Uset$.  Using
  the formulas for the eigenvalues in~\cite[p. 178]{RAH-CRJ:85}, we
  see that the next chain of inequalities hold:
  \begin{align*}
    &\alpha_1\big(\seminorm{x}{A}^p\big)
    =\alpha_1\Big(\big(\xuperpendicular\tp \At
    A\xuperpendicular\big)^{\frac{p}{2}}\Big) \le \alpha_1
    \Big(\big(\lambdamax(\At A)\xuperpendicular\tp
    \xuperpendicular\big)^{\frac{p}{2}}\Big)
    \\
    &\le \big(\lambda_{n-k}(\Bt B) \xuperpendicular\tp
    \xuperpendicular\big)^{\frac{q}{2}} \le\big(\xuperpendicular\tp
    \Bt B\xuperpendicular\big)^{\frac{q}{2}} \le\big(\lambdamax(\Bt B)
    \xuperpendicular\tp \xuperpendicular\big)^{\frac{q}{2}}
    \\
    &\le \alpha_2 \Big(\big(\lambda_{n-k}(\At A) \xuperpendicular\tp
    \xuperpendicular\big)^{\frac{p}{2}}\Big)
    \le\alpha_2\Big(\big(\xuperpendicular\tp \At
    A\xuperpendicular\big)^{\frac{p}{2}}\Big)
    =\alpha_2\big(\seminorm{x}{A}^p\big),
  \end{align*}
  where $\seminorm{x}{B}^q=\big(\xuperpendicular\tp \Bt
  B\xuperpendicular\big)^{\frac{q}{2}}$. From this we conclude that
  $\seminorm{.}{A}^p\properinfty\seminorm{.}{B}^q$ in $\real^n$. Finally, when
  $p=q$, the class~$\classkinfty$ functions $\alpha_1$, $\alpha_2$
  in the statement are linear, so we obtain that
  $\seminorm{.}{A}^p\relationconstants\seminorm{.}{B}^p$ in $\real^n$.
\end{proof}

%
Next we show that~$\properinfty$ and~$\properinftyccp$ are reflexive,
symmetric, and transitive, and hence define equivalence relations.

\begin{lemma}\longthmtitle{The $\classkinfty$- and
    $\classkinftyccp$-proper relations are equivalence
    relations}\label{le:kinf-equivalence-relation}
  The relations $\properinfty$ and $\properinftyccp$ in any set
  $\dom\subseteq\real^n$ are both equivalence relations.
\end{lemma}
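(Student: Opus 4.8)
The plan is to check the three defining properties of an equivalence relation---reflexivity, symmetry, and transitivity---for each of $\properinfty$ and $\properinftyccp$ on an arbitrary common domain $\dom\subseteq\real^n$, relying throughout on the closure properties of $\classkinfty$ functions (and of convex/concave $\classkinfty$ functions) under inversion and composition recalled in Section~\ref{sec:pre-classks-convexity}. The domain $\dom$ plays no role beyond being the shared domain of the functions involved, so I suppress the phrase ``in $\dom$'' in this sketch.

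For \emph{reflexivity} I would take $\alpha_1=\alpha_2=\operatorname{id}$, which is in $\classkinfty$ and is simultaneously convex and concave; the tautology $\operatorname{id}(\lyap(x))\le\lyap(x)\le\operatorname{id}(\lyap(x))$ then gives $\lyap\properinfty\lyap$ and $\lyap\properinftyccp\lyap$. For \emph{symmetry}, suppose $\alpha_1(\lyapw(x))\le\lyap(x)\le\alpha_2(\lyapw(x))$. Since $\alpha_1,\alpha_2\in\classkinfty$ are invertible with $\classkinfty$ inverses, applying $\alpha_2^{-1}$ to the right-hand inequality and $\alpha_1^{-1}$ to the left-hand one yields $\alpha_2^{-1}(\lyap(x))\le\lyapw(x)\le\alpha_1^{-1}(\lyap(x))$, i.e., $\lyapw\properinfty\lyap$. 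In the $\properinftyccp$ case, $\alpha_1$ is convex and $\alpha_2$ is concave, so by the inverse-function statement from~\cite[Ex.~3.3]{SB-LV:09} quoted in Section~\ref{sec:pre-classks-convexity}, the new lower gain $\alpha_2^{-1}$ is convex and the new upper gain $\alpha_1^{-1}$ is concave, as required for $\lyapw\properinftyccp\lyap$.

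For \emph{transitivity}, suppose in addition $\gamma_1(\lyapu(x))\le\lyapw(x)\le\gamma_2(\lyapu(x))$. Using the monotonicity of $\alpha_1$ and $\alpha_2$ to sandwich $\lyap(x)$ through $\lyapw(x)$ gives $(\alpha_1\circ\gamma_1)(\lyapu(x))\le\lyap(x)\le(\alpha_2\circ\gamma_2)(\lyapu(x))$, and since a composition of $\classkinfty$ functions is $\classkinfty$, this shows $\lyap\properinfty\lyapu$. In the $\properinftyccp$ case, $\alpha_1\circ\gamma_1$ is a composition of convex nondecreasing functions, hence convex, and $\alpha_2\circ\gamma_2$ is a composition of concave nondecreasing functions, hence concave, again by the composition rules recalled in Section~\ref{sec:pre-classks-convexity}; thus $\lyap\properinftyccp\lyapu$. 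Combining the three properties yields that both $\properinfty$ and $\properinftyccp$ are equivalence relations.

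The argument is entirely routine; the only thing to watch is the bookkeeping, namely invoking at each step the correct preservation statement ($\classkinfty$ in general, convexity of the \emph{lower} gain and concavity of the \emph{upper} gain for the convex-concave refinement) together with the right monotonicity hypothesis. There is no delicate estimate, so I do not anticipate a genuine obstacle beyond keeping the convex/concave roles straight when inverting and composing.
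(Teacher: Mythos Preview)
Your proof is correct and follows essentially the same approach as the paper: reflexivity via the identity, symmetry via inversion, and transitivity via composition, each time invoking the closure properties of $\classkinfty$ functions (and of convex/concave $\classkinfty$ functions) from Section~\ref{sec:pre-classks-convexity}. The only difference is cosmetic notation in the transitivity step.
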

\begin{proof}
  For convenience, we represent both relations by~$\relation$.  Both
  are {reflexive}, i.e., $\lyap\relation\lyap$, because one can take
  $\alpha_1(r)=\alpha_2(r)=r$ noting that a linear function is both
  convex and concave.  Both are {symmetric}, i.e.,
  $\lyap\relation\lyapw$ if and only if $\lyapw\relation\lyap$,
  because if $\alpha_1 \circ \lyapw \le \lyap \le \alpha_2 \circ
  \lyapw$ in $\dom$, then $\alpha_2^{-1} \circ \lyap \le \lyapw \le
  \alpha_1^{-1} \circ \lyap$ in $\dom$. In the case of~$\properinfty$,
  the inverse of a class~$\classkinfty$ function is
  class~$\classkinfty$. Additionally, in the case
  of~$\properinftyccp$, if $\alpha\in\classkinfty$ is convex
  (respectively, concave), then $\alpha^{-1}\in\classkinfty$ is
  concave (respectively, convex). Finally, both are {transitive},
  i.e., $\lyapu\relation\lyap$ and $\lyap\relation\lyapw$ imply $
  \lyapu\relation\lyapw$, because if $\alpha_1 \circ \lyap \le \lyapu
  \le \alpha_2 \circ \lyap$ and $\tilde{\alpha}_1 \circ \lyapw \le
  \lyap \le \tilde{\alpha}_2 \circ \lyapw$ in $\dom$, then $\alpha_1
  \circ \tilde{\alpha}_1 \circ \lyapw \le \lyapu \le \alpha_2 \circ
  \tilde{\alpha}_2 \circ \lyapw$ in $\dom$. In the case
  of~$\properinfty$, the composition of two class $\classkinfty$
  functions is class~$\classkinfty$. Additionally, in the case
  of~$\properinftyccp$, if $\alpha_1,\alpha_2\in\classkinfty$ are both
  convex (respectively, concave), then the compositions
  $\alpha_1\circ\alpha_2$ and $\alpha_2\circ\alpha_1$ belong
  to~$\classkinfty$ and are convex (respectively, concave), as
  explained in Section~\ref{sec:pre-classks-convexity}.
\end{proof}

\begin{remark}\longthmtitle{The relation~$\proper$ is not an
    equivalence relation}
  {\rm The proof above also shows that the relation~$\proper$ is
    reflexive and transitive. However, it is not symmetric: consider
    $\lyap, \lyapw\in\psd$ given by $\lyap(x)=1-e^{-\norm{x}}$ and
    $\lyapw(x)=\norm{x}$. Clearly, $\lyap\proper\lyapw$ in $\real^n$
    by taking $\alpha_1=\alpha_2=\alpha \in\classk$, with
    $\alpha(s)=1-e^{-s}$. On the other hand, if there exist
    $\tilde{\alpha}_1, \tilde{\alpha}_2\in\classk$ such that
    $\tilde{\alpha}_1(\lyap(x)) \le \lyapw(x) \le
    \tilde{\alpha}_2(\lyap(x))$ for all $x\in\real^n$, then we reach
    the contradiction, by continuity of $\tilde{\alpha}_2$, that
    $\lim_{\norm{x}\to\infty}\norm{x} \le\tilde{\alpha}_2
    \big(\lim_{\norm{x}\to\infty} \big(
    1-e^{-\norm{x}}\big)\big)=\tilde{\alpha}_2(1)<\infty$.
  } \oprocend
\end{remark}

\subsection{Characterization of proper functions with respect to each
  other}\label{sec:characterizations}

In this section, we provide a complete characterization of the
properties that two functions must satisfy to be related by the
equivalence relations defined in Section~\ref{sec:equivalent-classes}.
For $\dom\subseteq \real^n$, consider 
$\map{\lyapone, \psdfuncalone}{\dom}{\realnonnegative}$. 
Given a real number~$p>0$, define
\begin{align*}
  \phip(s)&\triangleq\sup_{\{y\in\dom\,:\, \psdfunc{y}\,\le \sqrt[p]{s}\}}
  \lyapone(y),
  \\
  \psip(s)&\triangleq\inf_{\{y\in\dom\,:\, \psdfunc{y}\,\ge\sqrt[p]{s}\}}
  \lyapone(y),
\end{align*}
for $s \ge 0$. The value $\phip(s)$ gives the supremum of the function
$\lyapone$ in the $\sqrt[p]{s}$\,-\,sublevel set of $\psdfuncalone$,
and $\psip(s)$ is the infimum of $\lyapone$ in the
$\sqrt[p]{s}$\,-\,superlevel set of $\psdfuncalone$. Thus, the
functions $\phip$ and $\psip$ satisfy
\begin{align}\label{eq:psi-phi-sandwich}
  \psip\big(\psdfunc{x}^p\big)\,
  =\,\!\!\!\inf_{\substack{\{y\in\dom\,:\\ \psdfunc{y}\,\ge\,
      \psdfunc{x}\}}}\!\!\!\lyapone(y)\, \le\,\lyapone(x)\,
  \le\,\!\!\!\sup_{\substack{\{y\in\dom\,:\\ \psdfunc{y}\,\le\,
      \psdfunc{x}\}}}\!\!\!\lyapone(y)
  \,=\,\phip\big(\psdfunc{x}^p\big),
\end{align}
for all $x\in\dom$, which suggests $\phip$ and $\psip$ as
pre-comparison functions to construct $\alpha_1$ and $\alpha_2$ in
Definition~\ref{def:kinf-proper-wrt-each-other}.  To this end, we find
it useful to formulate the following properties of the function
$\lyapone$ with respect to $\psdfuncalone$:

%
$\hzero$: The set
$\{x\in\dom:\psdfunc{x} = s\}$ is nonempty for all $s\ge0$.

$\hone$: The nullsets of $\lyapone$ and $\psdfuncalone$ are the same,
i.e.,
$\setdef{x\in\dom}{\lyapone(x)=0}=\setdef{x\in\dom}{\psdfunc{x}=0}$.

$\htwo$: The function $\phione$ is locally bounded in
$\realnonnegative$ and right continuous at~$0$, and $\psione$ is
positive definite.

$\hthree$: The next limit holds: $\lim_{s\to\infty}\psione(s)=\infty$.

$\hfour$ (as a function of $p>0$): The asymptotic behavior of
$\phip$ and $\psip$ is such that $\phip(s)$ and $s^2/\psip(s)$ are
both in $\Obig(s)$ as $s\to\infty$.

The next result shows that these properties completely characterize
whether the functions $\lyapone$ and $\lyaptwo$ are related through the
equivalence relations defined in
Section~\ref{sec:equivalent-classes}. This result
generalizes~\cite[Lemma 4.3]{HKK:02} in several ways: the notions of
proper functions considered here are more general and are not necessarily
restricted to a relationship between an arbitrary function and the
distance to a compact set.

\begin{theorem}\longthmtitle{Characterizations of proper functions
    with respect to each other}\label{prop:V-is-kinf-proper-seminorm}
  Let $\map{\lyapone, \psdfuncalone}{\dom}{\realnonnegative}$, and
  assume $\psdfuncalone$ satisfies $\hzero$. Then
  \begin{enumerate}
  \item $\lyapone$ satisfies $\hipartialset$ with respect to
    $\psdfuncalone$ \;$\Leftrightarrow$\;
    $\lyapone\,\proper\,\psdfuncalone$ in $\dom$\,;
  
  \item $\lyapone$ satisfies $\hiset$ with respect to $\psdfuncalone$
    \;$\Leftrightarrow$\; $\lyapone\properinfty\psdfuncalone$ in
    $\dom$\,;
  
  \item $\lyapone$ satisfies $\hisetfull$ with respect to
    $\psdfuncalone$ for $p>0$ \;$\Leftrightarrow$\;
    $\lyapone\properinftyccp\psdfuncalone^p$ in $\dom$.
  \end{enumerate}
\end{theorem}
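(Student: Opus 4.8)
The plan is to prove each of the three equivalences separately, and within each to prove both implications. For the ``relation $\Rightarrow$ properties'' direction the argument is routine: given comparison functions with $\alpha_1(\psdfunc{x})\le\lyapone(x)\le\alpha_2(\psdfunc{x})$ (resp.\ with $\psdfuncalone^p$ in place of $\psdfuncalone$), monotonicity of $\alpha_1,\alpha_2$ together with the definitions of $\phip,\psip$ gives $\psip(s)\ge\alpha_1(s)$ and $\phip(s)\le\alpha_2(s)$ for all $s$ (and, via the reparametrisation $\phip(s)=\phione(s^{1/p})$, $\psip(s)=\psione(s^{1/p})$, the corresponding bounds on $\phione,\psione$). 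From these one reads off, in order: $\hone$ (nullsets coincide, using $\alpha_i\in\classk$ and $\hzero$); $\htwo$ ($\phione$ is locally bounded and right continuous at $0$ because $0\le\phione\le\alpha_2$ with $\alpha_2$ continuous and $\alpha_2(0)=0$, and $\psione$ is positive definite because $\psione\ge\alpha_1$ with $\hzero$); $\hthree$ ($\psione\ge\alpha_1\to\infty$ when $\alpha_1\in\classkinfty$); and $\hfour(p)$ ($\phip(s)\le\alpha_2(s)\in\Obig(s)$ since a concave $\classkinfty$ function grows at most linearly, and $\psip(s)\ge\alpha_1(s)\ge cs$ for large $s$ since a convex $\classkinfty$ function grows at least linearly).

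For the converse directions the common strategy is to start from the sandwich \eqref{eq:psi-phi-sandwich} and replace the pre-comparison functions $\phip,\psip$ --- which are only nondecreasing and may be discontinuous, since $\Uset$ need not be compact --- by genuine comparison functions of the required class lying above $\phip$ and below $\psip$. For part (i): under $\hone$--$\htwo$, $\phione$ is nondecreasing, finite, right continuous at $0$, with $\phione(0)=0$; then $\alpha_2(s):=s+\int_1^2\phione(su)\,du$ is continuous, strictly increasing, vanishes at $0$, and satisfies $\alpha_2\ge\phione$, so $\alpha_2\in\classk$. Dually, $\psione$ is nondecreasing, vanishes at $0$, and is positive on $(0,\infty)$; with the continuous positive minorant $\tilde\psi(s):=\int_{1/2}^1\psione(su)\,du\le\psione(s)$, the function $\alpha_1(s):=\big(s^{-1}+\tilde\psi(s)^{-1}\big)^{-1}$ is continuous, strictly increasing, vanishes at $0$, and lies below $\psione$, so $\alpha_1\in\classk$; \eqref{eq:psi-phi-sandwich} then yields $\alpha_1(\psdfunc{x})\le\lyapone(x)\le\alpha_2(\psdfunc{x})$. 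For part (ii) the same $\alpha_1,\alpha_2$ work: $\hthree$ forces $\tilde\psi\to\infty$ and hence $\alpha_1\to\infty$, while $\alpha_2\ge s\to\infty$, so both lie in $\classkinfty$.

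The substantive case is part (iii), where the comparison functions must additionally be convex and concave and the relation is to $\psdfuncalone^p$. Let $\kappa_1,\kappa_2,s_0$ be constants from $\hfour(p)$, so $\phip(s)\le\kappa_1 s$ and $\psip(s)\ge s/\kappa_2$ for $s\ge s_0$. I would take $\alpha_2$ to be the least concave majorant of $\bar\phi(s):=\tfrac1s\int_s^{2s}\phip(\tau)\,d\tau$ and $\alpha_1$ the greatest convex minorant of $\tilde g(s):=\min\{\psip(s),\,s/(2\kappa_2)\}$. One checks that $\bar\phi$ is continuous, $\ge\phip$, vanishes at $0$ (by right continuity of $\phip$), and satisfies $\bar\phi(s)\le M+\tfrac{3\kappa_1}{2}s$ for a suitable $M$; hence its least concave majorant is finite, concave, nondecreasing, vanishes at $0$ (it lies below each affine majorant $\epsilon+\kappa_\epsilon s$ of $\bar\phi$), and tends to $\infty$ because $\alpha_2\ge\phip\ge\psip\to\infty$ by $\hthree$, so a concave function tending to $\infty$ is strictly increasing and $\alpha_2$ is a concave $\classkinfty$ function. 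Dually, $\tilde g$ is nondecreasing, positive on $(0,\infty)$, vanishes at $0$, and coincides with the line $s/(2\kappa_2)$ for $s\ge s_0$; using the two-point (Carath\'eodory) description of the convex envelope one shows its greatest convex minorant $\alpha_1$ is positive on $(0,\infty)$, hence --- being convex with $\alpha_1(0)=0$ --- strictly increasing, and --- being strictly increasing and convex on $[0,\infty)$ --- unbounded, so $\alpha_1$ is a convex $\classkinfty$ function. Since $\alpha_1\le\tilde g\le\psip$ and $\phip\le\bar\phi\le\alpha_2$, the sandwich \eqref{eq:psi-phi-sandwich} gives $\alpha_1(\psdfunc{x}^p)\le\lyapone(x)\le\alpha_2(\psdfunc{x}^p)$, i.e.\ $\lyapone\properinftyccp\psdfuncalone^p$ in $\dom$.

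The main obstacle is exactly this step in part (iii): manufacturing a convex $\classkinfty$ minorant of $\psip$ and a concave $\classkinfty$ majorant of $\phip$. Two points require care: first, $\phip$ and $\psip$ are merely monotone and can be discontinuous, so one must smooth them (via the averages above) without destroying the one-sided bounds; second, convex $\classkinfty$ functions grow at least linearly while concave $\classkinfty$ functions grow at most linearly, so such envelopes can exist, and be non-degenerate (finite, vanishing at $0$, strictly increasing, unbounded), only because $\hfour(p)$ forces $\psip$ to grow at least linearly and $\phip$ at most linearly --- the truncation $\min\{\psip,\,s/(2\kappa_2)\}$ and the specific exponent $p$ are what align the two envelopes so that $\alpha_1\le\psip$ and $\phip\le\alpha_2$ hold simultaneously. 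Once these envelopes are in place, \eqref{eq:psi-phi-sandwich} closes the argument, and the cases (i) and (ii) need only the softer smoothings described above together with $\hthree$.
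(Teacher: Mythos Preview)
Your proof is correct and takes a genuinely different route from the paper's. The ``relation $\Rightarrow$ properties'' direction is handled the same way in both. For the converse, the paper builds $\alpha_1,\alpha_2$ in part~(i) by an explicit piecewise-linear interpolation over a sequence of breakpoints, with the increments chosen so that the slack is summable; this yields, as a by-product, the asymptotic equivalences $\alpha_2(s)\in\Obig(\phione(s))$ and $\psione(s)\in\Obig(\alpha_1(s))$ as $s\to\infty$. Part~(ii) reuses the same $\alpha_1,\alpha_2$, and part~(iii) composes them with $s\mapsto s^{1/p}$ and then invokes a separate lemma (Lemma~A.1 in the appendix) which, for any $\classkinfty$ function with two-sided linear growth at infinity, produces convex and concave $\classkinfty$ bounds via the convex/concave hulls of the epigraph/hypograph. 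Your approach instead smooths $\phione,\psione$ directly with integral averages and the harmonic-type combination $(s^{-1}+\tilde\psi(s)^{-1})^{-1}$ for parts~(i)--(ii), and for part~(iii) takes the concave and convex envelopes of a smoothed $\phip$ and a truncated $\psip$ in one shot, arguing positivity of the convex envelope from the two-point Carath\'eodory description together with the eventual linearity forced by $\hfour$. Your constructions are shorter and avoid the bookkeeping of the piecewise-linear scheme; the paper's route is more explicit and modular, isolating the envelope step into a stand-alone lemma that can be reused, at the cost of first having to establish the intermediate $\Obig$ relations between $\alpha_i$ and $\phione,\psione$.
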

\begin{proof}
  We begin by establishing a few basic facts about the pre-comparison
  functions $\psip$ and $\phip$.
  By definition and by $\hzero$, it follows that $0\le\psione(s)\le
  \phione(s)$ for all $s\ge0$. Since $\phione$ is locally bounded by
  $\htwo$, then so is $\psione$. In particular, $\phione$ and
  $\psione$ are well defined in $\realnonnegative$. Moreover, both
  $\phione$ and $\psione$ are nondecreasing because if $s_2\ge s_1$,
  then the supremum is taken in a larger set,
  $\{x\in\dom:\psdfunc{x}\le s_2\}\supseteq\{x\in\dom:\psdfunc{x}\le
  s_1\}$, and the infimum is taken in a smaller set,
  $\{x\in\dom:\psdfunc{x}\ge s_2\}\subseteq\{x\in\dom:\psdfunc{x}\ge
  s_1\}$.
  Furthermore, for any $q>0$, the functions $\phiq$ and $\psiq$ are
  also monotonic and positive definite because
  $\phiq(s)=\phione(\sqrt[q]{s})$ and $\psiq(s)=\psione(\sqrt[q]{s})$
  for all $s\ge0$.  We now use these properties of the pre-comparison
  functions to construct $\alpha_1$, $\alpha_2$ in
  Definition~\ref{def:kinf-proper-wrt-each-other} required by the
  implications from left to right in each statement.
%
   
  Proof of \emph{(i)} ($\Rightarrow$). To show the existence of
  $\alpha_2\in\classk$ such that $\alpha_2(s)\ge\phione(s)$ for all
  $s\in\realnonnegative$, we proceed as follows. Since $\phione$ is
  locally bounded and nondecreasing, given a strictly increasing
  sequence $\{b_k\}_{k\ge1}\subseteq\realnonnegative$ with
  $\lim_{k\to\infty}b_k=\infty$, we choose the sequence
  $\{M_k\}_{k\ge1}\subseteq\realnonnegative$, setting $M_{0}=0$, in
  the following way:
  \begin{align}\label{eq:def+Mk}
    M_k\triangleq\max\Big\{\sup_{s\in[0, b_k]} \phione(s)\,,\:
    M_{k-1}+1/k^2\Big\} =\max\big\{\phione(b_k)\,,\:
    M_{k-1}+1/k^2\big\}.
  \end{align}
  This choice guarantees that $\{M_k\}_{k\ge1}$ is strictly increasing
  and, for each~$k\ge 1$,
  \begin{align}\label{eq:pi_bound}
    0\le M_k-\phione(b_k)\le\sum_{i=1}^k \frac{1}{i^2}\le \pi^2/6.
  \end{align}
  Also, since~$\phione$ is right continuous at~$0$, we can choose
  $b_1>0$ such that there exists
  $\map{\alpha_2}{[0,b_1]}{\realnonnegative}$ continuous, positive
  definite and strictly increasing, satisfying that
  $\alpha_2(s)\ge\phione(s)$ for all $s\in[0, b_1]$ and with
  $\alpha_2(b_1)=M_2$. (This is possible because the only function
  that cannot be upper bounded by an arbitrary continuous function in
  some arbitrarily small interval $[0,b_1]$ is the function that has a
  jump at~$0$.)
  The rest of the construction is explicit. We
  define $\alpha_2$ as a piecewise linear function in $(b_1, \infty)$ in
  the following way: for each $k\ge2$, we define
  \begin{align*}
    \alpha_2(s)\triangleq\alpha_2(b_{k-1}) +
    \frac{M_{k+1}-\alpha_2(b_{k-1})}{b_{k}-b_{k-1}}\,(s-b_{k-1}),
    \qquad\forall s\in(b_{k-1}, b_{k}].
  \end{align*}
  The resulting $\alpha_2$ is continuous by construction. Also,
  $\alpha_2(b_1)=M_2$, so that, inductively, $\alpha_2(b_{k-1})=M_{k}$
  for $k\ge2$. Two facts now follow: first,
  $M_{k+1}-\alpha_2(b_{k-1})=M_{k+1}-M_{k}\ge 1/(k+1)^2$ for $k\ge2$,
  so $\alpha_2$ has positive slope in each interval $(b_{k-1}, b_{k}]$
  and thus is strictly increasing in $(b_1,\infty)$; second,
  $\alpha_2(s)>\alpha_2(b_{k-1})= M_k\ge \phione(b_{k})\ge\phione(s)$
  for all $s\in(b_{k-1}, b_{k}]$, for each $k\ge2$, so
  $\alpha_2(s)\ge\phione(s)$ for all $s\in(b_1,\infty)$.

  We have left to show the existence of $\alpha_1\in\classk$ such that
  $\alpha_1(s)\le\psione(s)$ for all $s\in\realnonnegative$. First,
  since $0\le\psione(s)\le \phione(s)$ for all $s\ge0$ by definition
  and by $\hzero$, using the sandwich theorem~\cite[p. 107]{JL-ML:88},
  we derive that $\psione$ is right continuous at~$0$ the same as
  $\phione$. In addition, since $\psione$ is nondecreasing, it can
  only have a countable number of jump discontinuities (none of them
  at~$0$). Therefore, we can pick $c_1>0$ such that a continuous and
  nondecreasing function $\psionetilde$ can be constructed in $[0,
  c_1)$ by removing the jumps of $\psione$, so that
  $\psionetilde(s)\le\psione(s)$. Moreover, since $\psione$ is
  positive definite and right continuous at~$0$, then $\psionetilde$
  is also positive definite. Thus, there exists $\alpha_1$ in $[0,
  c_1)$ continuous, positive definite, and strictly increasing, such
  that, for some $r<1$,
  \begin{align}\label{eq:def-alpha1-before-c1}
    \alpha_1(s)\le r\psionetilde(s)\le
    r\psione(s) 
  \end{align} 
  for all $s\in [0, c_1)$. To extend $\alpha_1$ to a function in
  class~$\classk$ in~$\realnonnegative$, we follow a similar strategy
  as for $\alpha_2$. Given a strictly increasing sequence
  $\{c_k\}_{k\ge 2}\subseteq\realnonnegative$ with $\lim_{k\to\infty}
  c_k=\infty$, we define a sequence
  $\{m_k\}_{k\ge1}\subseteq\realnonnegative$ in the following way:
  \begin{align}\label{eq:def+mk}
    m_k\triangleq\inf_{s\in [c_{k}, c_{k+1})} \psione(s)
    -\tfrac{\psione(c_1)-\alpha_1(c_1)}{1+k^2}
    =\psione(c_{k})-\tfrac{\psione(c_1)-\alpha_1(c_1)}{1+k^2}.
  \end{align}  
  Next we define $\alpha_1$ in
  $[c_1, \infty)$ as the piecewise linear function
  \begin{align*} 
       \alpha_1(s)\triangleq\alpha_1(c_{k}) +
       \frac{m_{k}-\alpha_1(c_{k})}{c_{k+1}-c_{k}}\,(s-c_{k}), \qquad\forall
       s\in[c_{k}, c_{k+1}),
  \end{align*}
  for all $k\ge 1$, so $\alpha_1$ is continuous by construction. It is
  also strictly increasing because
  $\alpha_1(c_2)=m_1=(\psione(c_1)+\alpha_1(c_1))/2>\alpha_1(c_1)$
  by~\eqref{eq:def-alpha1-before-c1}, and also, for each $k\ge 2$, the
  slopes are positive because $m_{k}-\alpha_1(c_{k})=m_{k}-m_{k-1}>0$
  (due to the fact that $\{m_k\}_{k\ge 1}$ in~\eqref{eq:def+mk} is
  strictly increasing because $\psione$ is nondecreasing). Finally,
  $\alpha_1(s)<\alpha_1(c_{k+1})= m_k<\psione(c_k)\le \psione(s)$ for
  all $s\in[c_{k}, c_{k+1})$, for all $k\ge 1$ by~\eqref{eq:def+mk}.

  Equipped with $\alpha_1$, $\alpha_2$ as defined above, and as a
  consequence of~\eqref{eq:psi-phi-sandwich}, we have that
  \begin{align}\label{eq:application-psi-phi-sandwich}
    \alpha_1(\psdfunc{x})&\,\le\psione(\psdfunc{x}) \le \lyapone(x)
    \le\phione(\psdfunc{x})\le\alpha_2(\psdfunc{x}),\quad\forall
    x\in\dom.
  \end{align}
  This concludes the proof of~\emph{(i)}~($\Rightarrow$).
  
  As a preparation for \emph{(ii)-(iii)}~($\Rightarrow$), and
  assuming~$\hthree$, we derive two facts regarding the functions
  $\alpha_1$ and $\alpha_2$ constructed above.
  First, we establish that
  \begin{align}\label{eq:asymptotic-phione-alpha}
    \alpha_2(s)\in\Obig(\phione(s))\;\,\text{as}\;\, s\to\infty.
  \end{align}
  To show this, we argue that
  \begin{align}\label{eq:ingredient-slackness}
    \lim_{k\to\infty}\,\sup_{s\in(b_{k-1}, b_{k}]}
    \big(\alpha_2(s)-\phione(s)\big) \le\lim_{k\to\infty}
    \big(\phione(b_{k+1})-\phione(b_{k-1})\big)+\pi^2/6,
  \end{align}
  so that there exist $C, s_1>0$ such that $\alpha_2(s)\le
  3\phione(s)+C$, for all $s\ge s_1$. Thus, noting that
  $\lim_{s\to\infty} \phione(s)=\infty$ as a consequence of $\hthree$,
  the expression~\eqref{eq:asymptotic-phione-alpha} follows.
  To establish~\eqref{eq:ingredient-slackness}, we use the
  monotonicity of $\alpha_2$ and $\phione$,~\eqref{eq:def+Mk}
  and~\eqref{eq:pi_bound}. For $k\ge 2$,
  \begin{align*}
    &\sup_{s\in(b_{k-1}, b_{k}]}
    \big(\alpha_2(s)-\phione(s)\big)\le\alpha_2(b_k)
    -\phione(b_{k-1})=M_{k+1}-\phione(b_{k-1})
    \\
    =\,&\max\big\{\phione(b_{k+1})-\phione(b_{k-1})\,,\:
    M_{k}+1/(k+1)^2-\phione(b_{k-1})\big\}
    \\
    \le\, &\max\big\{\phione(b_{k+1})-\phione(b_{k-1})\,,\:
    \phione(b_k)+\pi^2/6+1/(k+1)^2-\phione(b_{k-1})\big\}.
  \end{align*}
  Second,
  the construction of $\alpha_1$ guarantees that
  \begin{align}\label{eq:asymptotic-psione-alpha}
    \psione(s)\in\Obig(\alpha_1(s))\;\, \text{as}\;\, s\to\infty,
  \end{align}
  because, as we show next,
  \begin{align} \label{eq:ingredient-slackness-psi}
    \lim_{k\to\infty}\,\sup_{s\in[c_{k}, c_{k+1})}
    \big(\psione(s)-\alpha_1(s)\big) \le\lim_{k\to\infty}
    \big(\alpha_1(c_{k+2})-\alpha_1(c_{k})\big),
  \end{align}
  so there exists $s_2>0$ such that $\psione(s)\le 3\alpha_1(s)$ for
  all $s\ge s_2$. To obtain~\eqref{eq:ingredient-slackness-psi}, we
  leverage the monotonicity of $\psione$ and $\alpha_1$,
  and~\eqref{eq:def+mk}; namely, for $k\ge2$,
  \begin{align*} 
    \,&\sup_{s\in[c_{k}, c_{k+1})} \big(\psione(s)-\alpha_1(s)\big)
    \le\psione(c_{k+1})-\alpha_1(c_{k}) \\=\,&
    m_{k+1}+\tfrac{\psione(c_1)-\alpha_1(c_1)}{1+(k+1)^2}-\alpha_1(c_{k})
    = \alpha_1(c_{k+2})+\tfrac{\psione(c_1)-\alpha_1(c_1)}{1+(k+1)^2} -
    \alpha_1(c_{k}).
  \end{align*}
  Equipped with~\eqref{eq:asymptotic-phione-alpha}
  and~\eqref{eq:asymptotic-psione-alpha}, we prove next
  \emph{(ii)-(iii)}~($\Rightarrow$).
  
  Proof of \emph{(ii)}~($\Rightarrow$): If, in addition,~$\hthree$ holds,
  then $\lim_{s\to\infty} \phione(s)\ge\lim_{s\to\infty}
  \psione(s)=\infty$. This guarantees that
  $\alpha_2\in\classkinfty$. Also, according
  to~\eqref{eq:asymptotic-psione-alpha}, $\hthree$ implies that
  $\alpha_1$ is unbounded, and thus in $\classkinfty$ as well.  The
  result now follows by~\eqref{eq:application-psi-phi-sandwich}.
      
  Proof of \emph{(iii)}~($\Rightarrow$): Finally, assume that $\hfour$
  also holds for some $p>0$. We show next the existence of the
  required convex and concave functions involved in the
  relation~$\properinftyccp$.
  %
  Let $\alpha_{1,p}(s)\triangleq\alpha_1(\sqrt[p]{s})$ and
  $\alpha_{2,p}(s)\triangleq\alpha_2(\sqrt[p]{s})$ for $s\ge 0$, so
  that
  \begin{align*}
    \alpha_{1,p}(s)=\alpha_1(\sqrt[p]{s})\le\psione(\sqrt[p]{s})=\psip(s)
    \quad\text{and}\quad
    \phip(s)=\phione(\sqrt[p]{s})\le\alpha_2(\sqrt[p]{s})=\alpha_{2,p}(s).
  \end{align*}
  From~\eqref{eq:asymptotic-phione-alpha} and~$\hfour$, it follows
  that there exist $s'$, $c_1$, $c_2>0$ such that $\alpha_2(s)\le c_1
  \phione(s)$ and $\phip(s)\le c_2 s$ for all $s\ge s'$. Thus,
  \begin{align*}
    \alpha_{2,p}(s)=\alpha_2(\sqrt[p]{s})\le c_1
    \phione(\sqrt[p]{s})=c_1 \phip(s)\le c_1 c_2 s,
  \end{align*}
  for all $s\ge s'$, so $\alpha_{2,p}(s)$ is in $\Obig(s)$ as
  $s\to\infty$.  Similarly, according to~\eqref{eq:asymptotic-psione-alpha}
  and~$\hfour$, there are constants $s''$, $c_3$, $c_4>0$ such
  that $\psione(s)\le c_3\, \alpha_1(s)$ and $s^2\le c_4 s\, \psip(s)$
  for all $s\ge s''$. Thus,
  \begin{align*}
    s\, \alpha_{1,p}(s)=s\, \alpha_1(\sqrt[p]{s})\ge s\,\tfrac{1}{c_3}
    \psione(\sqrt[p]{s}) =s\,\tfrac{1}{c_3} \psip(s)\ge \tfrac{1}{c_3
      c_4} s^2,
  \end{align*}
  for all $s\ge s''$, so $s^2/\alpha_{1,p}(s)$ is in $\Obig(s)$ as
  $s\to\infty$. Summarizing, the construction of $\alpha_1$,
  $\alpha_2$ guarantees, under $\hfour$, that $\alpha_{1,p}$,
  $\alpha_{2,p}$ satisfy that $s^2/\alpha_{1,p}(s)$ and
  $\alpha_{2,p}(s)$ are in $\Obig(s)$ as $s\to\infty$ (and, as a
  consequence, so are $s^2/\alpha_{2,p}(s)$ and
  $\alpha_{1,p}(s)$). Therefore, according to
  Lemma~\ref{le:convex-concave}, we can
  leverage~\eqref{eq:application-psi-phi-sandwich} by taking
  $\tilde{\alpha}_1$, $\tilde{\alpha}_2\in\classkinfty$, convex
    and concave, respectively, such that, for
  all~$x\in\dom$,
  \begin{align*}
    \tilde{\alpha}_1\big(\psdfunc{x}^p\big)
    \le&\,\alpha_{1,p}(\psdfunc{x}^p) =\alpha_1(\psdfunc{x})
    \le
    \psione\big(\psdfunc{x}\big)\le\lyapone(x)
    \\
    \le&\,\phione\big(\psdfunc{x}\big) 
    \le\alpha_2(\psdfunc{x}) =\alpha_{2,p}(\psdfunc{x}^p)
    \le\tilde{\alpha}_2\big(\psdfunc{x}^p\big).
  \end{align*}
  
  Proof of \emph{(i)}~($\Leftarrow$): If there exist class~$\classk$
  functions $\alpha_1$, $\alpha_2$ such that $\alpha_1(\psdfunc{x})\le
  \lyapone(x)\le\alpha_2(\psdfunc{x})$ for all $x\in\dom$, then the
  nullsets of $\lyapone$ and $\psdfuncalone$ are the same, which is
  the property~$\hone$. In addition,
  $0\le\phione(s)\le\alpha_2(s)$ for all $s\ge 0$, so $\phione$ is
  locally bounded and, moreover, the sandwich theorem guarantees that
  $\phione$ is right continuous at~$0$. Also, since
  $\alpha_1(s)\le\psione(s)$, for all $s\ge 0$, and $\psione(0)=0$, it
  follows that $\psione$ is positive definite. Therefore, $\htwo$ also
  holds.
    
  Proof of \emph{(ii)}~($\Leftarrow$): Since
  $\psione(s)\ge\alpha_1(s)$ for all $s\ge 0$, the property~$\hthree$
  follows because
  \begin{align*}
    \lim_{s\to\infty}\psione(s)\ge\lim_{s\to\infty}\alpha_1(s)=\infty.
  \end{align*}
  
  Proof of \emph{(iii)}~($\Leftarrow$): If
  $\lyapone\properinftyccp\psdfuncalone^p$, then
  $\lyapone\properinfty\psdfuncalone^p$
  by~\eqref{eq:chain-relations}. Also, we have trivially that
  $\psdfuncalone^p\properinfty\psdfuncalone$. Since~$\properinfty$ is
  an equivalence relation by Lemma~\ref{le:kinf-equivalence-relation},
  it follows that $\lyapone\properinfty\psdfuncalone$, so the
  properties $\hiset$ hold as in \emph{(ii)}~($\Leftarrow$). We have
  left to derive $\hfour$. If
  $\lyapone\properinftyccp\psdfuncalone^p$, then there exist
  $\alpha_1,\alpha_2\in\classkinfty$ convex and concave, respectively,
  such that $\alpha_1\big(\psdfunc{x}^p\big)\le\lyapone(x) \le
  \alpha_2\big(\psdfunc{x}^p\big)$ for all $x\in\dom$.  Hence, by the
  definition of $\psip$ and $\phip$, and~$\hzero$, and by the
  monotonicity of $\alpha_1$ and $\alpha_2$, we have that, for all
  $s\ge 0$,
  \begin{align}\label{eq:psis-and-alphas}
    \alpha_1(s) & \le \inf_{\{x\in\dom\,:\,\psdfunc{x}^p\ge s\}}
    \alpha_1\big(\psdfunc{x}^p\big)
    \le\inf_{\{x\in\dom\,:\,\psdfunc{x}^p\ge s\}} \lyapone(x)
    =\psip(s)\nonumber
    \\
    & \le\, \phip(s) =\sup_{\{x\in\dom\,:\, \psdfunc{x}^p\le
      s\}}\lyapone(x) \le\sup_{\{x\in\dom\,:\, \psdfunc{x}^p\le
      s\}}\alpha_2\big(\psdfunc{x}^p\big) \le\alpha_2(s).
  \end{align}
  Now, since $\alpha_1$,$\alpha_2\in\classkinfty$ are convex and
  concave, respectively, it follows by Lemma~\ref{le:convex-concave}
  that $s^2/\alpha_1(s)$ and $\alpha_2(s)$ are in $\Obig(s)$ as
  $s\to\infty$.  Knowing from~\eqref{eq:psis-and-alphas} that
  $\alpha_1(s)\le \psip(s)\le\phip(s)\le\alpha_2(s)$ for all $s\ge 0$,
  we conclude that the functions $s^2/\psip(s)$ and $\phip(s)$ are
  also in $\Obig(s)$ as $s\to\infty$, which is the property~$\hfour$.
\end{proof}

The following example shows ways in which the conditions of
Theorem~\ref{prop:V-is-kinf-proper-seminorm} might fail.

\begin{example}\longthmtitle{Illustration
    of~Theorem~\ref{prop:V-is-kinf-proper-seminorm}}
  {\rm Let $\map{\lyaptwo}{\real^2}{\realnonnegative}$ be the distance
    to the set $\setdef{(x_1,x_2)\in\real^2}{x_1=0}$, i.e.,
    $\lyaptwo(x_1,x_2)=\absolute{x_1}$.  Consider the following cases:

    \emph{P2 fails ($\psione$ is not positive definite):} Let
    $\lyapone(x_1,x_2)=\absolute{x_1}e^{-\absolute{x_2}}$ for
    $(x_1,x_2)\in\real^2$.  Note that $\lyapone$ is not
    $\classk$\,-\,dominated by $\psdfuncalone$ because, given any
    $\alpha_1\in\classk$, for every $x_1\in\real$ with
    $\absolute{x_1}>0$ there exists $x_2\in\real$ such that the
    inequality $\alpha_1(\absolute{x_1})\le
    \absolute{x_1}e^{-\absolute{x_2}}$ does not hold (just choose
    $x_2$ satisfying $\absolute{x_2} >
    \log\big(\tfrac{\absolute{x_1}}{\alpha_1(\absolute{x_1})}\big)$). Thus,
    there must be some of the hypotheses on
    Theorem~\ref{prop:V-is-kinf-proper-seminorm} that fail to be
    true. In this case, we observe that
    \begin{align*}
      \psione(s)=\inf_{\{(x_1,x_2)\in\real^2\,:\, \absolute{x_1}\,\ge
        s\}} \absolute{x_1}e^{-\absolute{x_2}}
    \end{align*}
    is identically~$0$ for all $s\ge 0$, so it is not positive
    definite as required in~$\htwo$.

    \emph{P2 fails ($\phione$ is not locally bounded):} Let
    $\lyapone(x_1,x_2)=\absolute{x_1}e^{\absolute{x_2}}$ for
    $(x_1,x_2)\in\real^2$. As above, one can show that $\alpha_2$ does
    not exist in the required class; in this case, the
    hypothesis~$\htwo$ is not satisfied because $\phione$ is not
    locally bounded in $(0,\infty)$:
    \begin{align*}
      \phione(s)=\sup_{\{(x_1,x_2)\in\real^2\,:\, \absolute{x_1}\,\le
        s\}} \absolute{x_1}e^{\absolute{x_2}}=\infty,  \quad \forall\;
      s>0.
    \end{align*}

    \emph{P2 fails ($\phione$ is not right continuous):} Let
    $\lyapone(x_1,x_2)=\absolute{x_1}^4 +\absolute{\sin{(x_1
        x_2)}}$ for $(x_1,x_2)\in\real^2$. 
    For every $p>0$, we have that
    \begin{align*}
      \phip(s)=\sup_{\{(x_1,x_2)\in\real^2\,:\, \absolute{x_1}^p\,\le
        s\}} \absolute{x_1}^4 +\absolute{\sin{(x_1 x_2)}}\le
      s^{4/p}+1,
    \end{align*}
    so $\phip$ is locally bounded in $\realnonnegative$, and, again
    for every $p>0$,
    \begin{align*}
      \psip(s)=\inf_{\{(x_1,x_2)\in\real^2\,:\, \absolute{x_1}^p\,\ge
        s\}} \absolute{x_1}^4 +\absolute{\sin{(x_1 x_2)}}\ge s^{4/p},
    \end{align*}
    so $\psip$ is positive definite.  However, $\phip$ is not right
    continuous at~$0$ because $\sin{(x_1 x_2)}=0$ when $x_1=0$, but
    $\sup_{\{(x_1,x_2)\in\real^2\,:\, \absolute{x_1}^p\,\le s_0\}}
    \sin{(x_1 x_2)}=1$ for any $s_0>0$, so by
    Theorem~\ref{prop:V-is-kinf-proper-seminorm}~\emph{(i)}, it
    follows that $\lyapone$ is not $\classk$\,-\,dominated by
    $\psdfuncalone$.

    \emph{P4 fails (non-compliant asymptotic behavior):} Let
    $\lyapone(x_1,x_2)=\absolute{x_1}^4$ for $(x_1,x_2)\in\real^2$. Then $\htwo$ is satisfied
    and $\hthree$ also holds because
    $\lim_{s\to\infty}\psione(s)=\lim_{s\to\infty} s^4=\infty$, so
    Theorem~\ref{prop:V-is-kinf-proper-seminorm}~\emph{(ii)} implies that
    $\lyapone$ and $\lyaptwo$ are $\classkinfty$-proper with respect
    to each other. However, in this case $\phip(s)=\psip(s)=s^{4/p}$,
    which implies that $\phip$ is not in $\Obig(s)$ as $s\to\infty$
    when $p\in(0,4)$, and $s^2/\psip(s)$ is not in $\Obig(s)$ as
    $s\to\infty$ when~$p>4$. Thus $\hfour$ is satisfied only for
    $p=4$, so Theorem~\ref{prop:V-is-kinf-proper-seminorm}~\emph{(iii)} implies
    that only in this case $\lyapone$ and $\lyaptwo^p$ are
    $\classkinftyccp$- proper with respect to each other.  Namely,
    for $p>4$, one cannot choose a convex
    $\alpha_1\in\classkinfty$ such that $\alpha_1(\absolute{x_1}^p)\le
    \absolute{x_1}^4$ for all $x_1\in\real$ and, if
    $p<4$, one cannot choose a concave $\alpha_2 \in
    \classkinfty$ such that $\absolute{x_1}^4 \le
    \alpha_2(\absolute{x_1}^p)$ for all $x_1\in\real$. 
    \oprocend}
\end{example}


\subsection{Application to noise-to-state
  stability}\label{sec:alternative}

In this section we use the results of
Sections~\ref{sec:equivalent-classes} and~\ref{sec:characterizations}
to study the noise-to-state stability properties of stochastic
differential equations of the
form~\eqref{eq:nonlinear-SDE-preliminaries}.  Our first result
provides a way to check whether a candidate function that satisfies a
dissipation inequality of the
type~\eqref{eq:theorem-second-hypothesis-VandW} is in fact a
noise-dissipative Lyapunov function, a strong NSS-Lyapunov function in
probability, or a $p$th~moment NSS-Lyapunov function.

\begin{corollary}\longthmtitle{Establishing proper relations between
    pairs of functions through
    seminorms}\label{co:kinf-proper-wrt-each-other} 
  Consider $\map{\lyap_1, \lyap_2}{\dom}{\realnonnegative}$ such that
  their nullset is a subspace~$\Uset$. Let $A,\Atilde \in
  \realmatricesrectangulararg{m}{n}$ be such that $\kernel(A) = \Uset
  = \kernel(\Atilde)$.  Assume that $\lyap_1$ and $\lyap_2$ satisfy
  $\{\operatorname{P}$i$\}_{i=0}^3$ with respect to $\seminorm{.}{A}$
  and $\seminorm{.}{\Atilde}$, respectively.  Then, for any $q>0$,
  \begin{align*}
    \lyap_1\properinfty\lyap_2, \quad \lyap_1\properinfty
    \seminorm{.}{A}^q,
    \quad
    \lyap_2\properinfty\seminorm{.}{\Atilde}^q\quad\text{in}\quad\dom.
  \end{align*}
  
  If, in addition, $\lyap_1$ and $\lyap_2$ satisfy $\hfour$ with
  respect to $\seminorm{.}{A}$ and $\seminorm{.}{\Atilde}$,
  respectively, for some $p>0$, then
  \begin{align*}
    \lyap_1\properinftyccp\lyap_2,
    \quad\lyap_1\properinftyccp\seminorm{.}{A}^p,
    \quad
    \lyap_2\properinftyccp\seminorm{.}{\Atilde}^p
    \quad\text{in}\quad\dom.
  \end{align*}
\end{corollary}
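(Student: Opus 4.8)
The plan is to reduce the statement to three tools already available: Theorem~\ref{prop:V-is-kinf-proper-seminorm}, which translates the properties $\hzero$--$\hfour$ into the proper relations; Lemma~\ref{le:kinf-proper-semidefinite-quadratic-forms}, which compares seminorms sharing a nullspace (with arbitrary powers); and Lemma~\ref{le:kinf-equivalence-relation}, which lets us chain relations by transitivity and symmetry, together with the nesting~\eqref{eq:chain-relations}. Throughout I may assume $A$ and $\Atilde$ are nonzero, since $\Uset=\real^n$ forces $\lyap_1\equiv\lyap_2\equiv 0$ on $\dom$ and all the asserted relations then hold trivially.

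\textbf{First block.} Since $\hzero$ for $\seminorm{.}{A}$ on $\dom$ is part of the standing hypothesis, and $\lyap_1$ satisfies $\hone$, $\htwo$, $\hthree$ with respect to $\seminorm{.}{A}$, Theorem~\ref{prop:V-is-kinf-proper-seminorm}\emph{(ii)} gives $\lyap_1\properinfty\seminorm{.}{A}$ in $\dom$, and symmetrically $\lyap_2\properinfty\seminorm{.}{\Atilde}$ in $\dom$. Next, Lemma~\ref{le:kinf-proper-semidefinite-quadratic-forms} (once with $B=\Atilde$, once with $B=A$) yields $\seminorm{.}{A}\properinfty\seminorm{.}{\Atilde}$, $\seminorm{.}{A}\properinfty\seminorm{.}{A}^q$, and $\seminorm{.}{\Atilde}\properinfty\seminorm{.}{\Atilde}^q$ in $\real^n$ for every $q>0$; restricting the two defining inequalities to $x\in\dom$ shows these hold in $\dom$ as well. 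Since $\properinfty$ is an equivalence relation, chaining $\lyap_1\properinfty\seminorm{.}{A}\properinfty\seminorm{.}{\Atilde}\properinfty\lyap_2$ gives $\lyap_1\properinfty\lyap_2$, chaining $\lyap_1\properinfty\seminorm{.}{A}\properinfty\seminorm{.}{A}^q$ gives $\lyap_1\properinfty\seminorm{.}{A}^q$, and chaining $\lyap_2\properinfty\seminorm{.}{\Atilde}\properinfty\seminorm{.}{\Atilde}^q$ gives $\lyap_2\properinfty\seminorm{.}{\Atilde}^q$, all in $\dom$.

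\textbf{Second block.} Assume in addition $\hfour$ for $\lyap_1$ with respect to $\seminorm{.}{A}$ and for $\lyap_2$ with respect to $\seminorm{.}{\Atilde}$, for the given $p>0$. Then $\lyap_1$ satisfies $\hzero$--$\hfour$ with respect to $\seminorm{.}{A}$, so Theorem~\ref{prop:V-is-kinf-proper-seminorm}\emph{(iii)} gives $\lyap_1\properinftyccp\seminorm{.}{A}^p$ in $\dom$, and symmetrically $\lyap_2\properinftyccp\seminorm{.}{\Atilde}^p$ in $\dom$. Lemma~\ref{le:kinf-proper-semidefinite-quadratic-forms} with $B=\Atilde$ and equal exponents gives $\seminorm{.}{A}^p\relationconstants\seminorm{.}{\Atilde}^p$ in $\real^n$, hence $\seminorm{.}{A}^p\properinftyccp\seminorm{.}{\Atilde}^p$ in $\dom$ by~\eqref{eq:chain-relations} (a positive-slope linear map lies in $\classkinfty$ and is both convex and concave). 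Since $\properinftyccp$ is an equivalence relation, chaining $\lyap_1\properinftyccp\seminorm{.}{A}^p\properinftyccp\seminorm{.}{\Atilde}^p\properinftyccp\lyap_2$ yields $\lyap_1\properinftyccp\lyap_2$ in $\dom$, and the two seminorm relations follow from the chain truncated at the appropriate point.

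\textbf{Main difficulty.} There is no substantial obstacle beyond bookkeeping: the two points to watch are that Lemma~\ref{le:kinf-proper-semidefinite-quadratic-forms} produces relations on all of $\real^n$ whereas we need them on $\dom$ (harmless, since restricting the defining inequalities to the subset preserves them), and that the convex--concave refinement $\properinftyccp$ can only be transferred between \emph{equal} powers of the seminorms, which is exactly why the second block concludes with $\seminorm{.}{A}^p$ rather than an arbitrary $\seminorm{.}{A}^q$, precisely as the statement is phrased.
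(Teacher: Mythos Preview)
Your proof is correct and follows essentially the same approach as the paper: apply Theorem~\ref{prop:V-is-kinf-proper-seminorm}\emph{(ii)}/\emph{(iii)} to obtain the relations between each $\lyap_i$ and the corresponding seminorm, invoke Lemma~\ref{le:kinf-proper-semidefinite-quadratic-forms} together with~\eqref{eq:chain-relations} to relate the seminorms (arbitrary powers for $\properinfty$, equal powers for $\properinftyccp$), and then chain via the equivalence-relation structure from Lemma~\ref{le:kinf-equivalence-relation}. Your added remarks on the trivial case $\Uset=\real^n$ and on restricting the seminorm inequalities from $\real^n$ to $\dom$ are harmless elaborations of the same argument.
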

\begin{proof}
  The statements follow from the characterizations in
  Theorem~\ref{prop:V-is-kinf-proper-seminorm}~\emph{(ii)}~and~\emph{(iii)},
  and from the fact that the relations $\properinfty$ and
  $\properinftyccp$ are equivalence relations as shown in
  Lemma~\ref{le:kinf-equivalence-relation}. That is, under the
  hypothesis~$\hzero$,
  \begin{align*}
    \!\!\left.\begin{array}{c}
        \!\!\lyap_1\:\text{satisfies}\:\hiset\;\text{w/ respect to}\;
        \seminorm{.}{A}\; (\Leftrightarrow\;
        \lyap_1\properinfty\seminorm{.}{A}\;\text{in}\;\dom)
        \\
        \!\!\lyap_2\:\text{satisfies}\:\hisettilde\;\text{w/ respect to}\;
        \seminorm{.}{\Atilde}\; (\Leftrightarrow\;
        \lyap_2\properinfty\seminorm{.}{\Atilde}\;\text{in}\;\dom)
      \end{array}
      \!\!\right\}\Rightarrow
    \lyap_1\properinfty\lyap_2\;\text{in}\;\dom,
    \\
    \!\!\left.\begin{array}{c}
        \!\!\lyap_1\:\text{satisfies}\:\hisetfull\;\text{w/ respect to}\;
        \seminorm{.}{A}\; (\Leftrightarrow\;
        \lyap_1\properinftyccp\seminorm{.}{A}^p\;\text{in}\;\dom)
        \\
        \!\!\lyap_2\:\text{satisfies}\:\hisetfulltilde\;\text{w/
          respect to}\;
        \seminorm{.}{\Atilde}\; (\Leftrightarrow\;
        \lyap_2\properinftyccp\seminorm{.}{\Atilde}^p\;\text{in}\;\dom)
      \end{array}
      \!\!\right\}\Rightarrow
    \lyap_1\properinftyccp\lyap_2\;\text{in}\;\dom.
  \end{align*}
  Note that, by
  Lemma~\ref{le:kinf-proper-semidefinite-quadratic-forms}
  and~\eqref{eq:chain-relations}, the equivalences
  \begin{align*}
    \seminorm{.}{A}\properinfty\seminorm{.}{\Atilde}^q\;\;\;\text{in}\;\;\dom,
    \qquad
    \seminorm{.}{A}^p\properinftyccp\seminorm{.}{\Atilde}^p\;\;\;\text{in}\;\;
    \dom
  \end{align*}
  hold for any $p,q>0$ and any matrices
  $A,\,\Atilde\in\realmatricesrectangulararg{m}{n}$ with
  $\kernel(A)=\kernel(\Atilde)$.
\end{proof}

We next build on this result to provide an alternative formulation of
Corollary~\ref{co:of-the-main-theorem}. To do so, we employ the
observation made in Remark~\ref{re:another-set-assumptions} about the
possibility of interpreting the candidate functions as defined on a
constrained domain of an extended Euclidean space.

\begin{corollary}\longthmtitle{The existence of a $p$thNSS-Lyapunov
    function implies $p$th~moment NSS --revisited}\label{co:alternative}
  Under Assumption~\ref{ass:assumptions-SDE}, let $\lyap\in\psdtwice$,
  $\lyapw\in\psd$ and $\gainnoise\in\classk$ be such that the
  dissipation inequality~\eqref{eq:theorem-hypothesis-ito} holds.  Let
  $\map{R}{\real^n}{\real^{(m-n)}}$, with $m\ge n$, $\dom \subset
  \real^m$, $\lyaphat\in\psddomaintwice$ and $\lyapwhat\in\psddomain$
  be such that, for $i(x) = [x\tp,R(x)\tp]\tp$, one has
  \begin{align*}
    \dom = i(\real^n), \quad \lyap = \lyaphat \circ i, \quad
    \text{and} \quad \lyapw = \lyapwhat \circ i .
  \end{align*}
  Let $A = \diag(A_1, A_2)$ and $\Atilde = \diag(\Atilde_1,\Atilde_2)$
  be block-diagonal matrices, with $A_1, \Atilde_1\in\realmatrices$
  and $A_2, \Atilde_2\in\realmatricesrectangulararg{(m-n)}{(m-n)}$,
  such that $\kernel(A)=\kernel(\Atilde)$ and
  \begin{align}\label{eq:bound-R}
    \seminorm{R(x)}{A_2}^2\le \kappa\seminorm{x}{A_1}^2
  \end{align}
  for some $\kappa>0$, for all $x\in\real^n$.  Assume that $\lyaphat$
  and $\lyapwhat$ satisfy the properties
  $\{\operatorname{P}$i$\}_{i=0}^4$ with respect to $\seminorm{.}{A}$
  and $\seminorm{.}{\Atilde}$, respectively, for some $p>0$. Then the
  system~\eqref{eq:nonlinear-SDE-preliminaries} is NSS in probability
  and in $p$th~moment with respect to~$\kernel(A_1)$.
\end{corollary}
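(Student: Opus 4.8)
The plan is to verify that $\lyap$ is a $p$th~moment NSS-Lyapunov function with respect to $\Uset\triangleq\kernel(A_1)$ in the sense of Definition~\ref{def:Noise-to-state-Lyapunov}, and then invoke Corollary~\ref{co:of-the-main-theorem}. Two ingredients are needed: (a) that $\lyap$ is a noise-dissipative Lyapunov function (Definition~\ref{def:noise-dissipative-Lyapunov}) — the dissipation inequality~\eqref{eq:theorem-hypothesis-ito} is assumed, so only a bound $\lyap(x)\le\increasingw(\lyapw(x))$ with concave $\increasingw\in\classkinfty$ remains to be produced; and (b) condition~\eqref{eq:theorem-third-hypothesis-V}, i.e., a convex $\gainNSSone\in\classkinfty$ and some $\gainNSStwo\in\classkinfty$ with $\gainNSSone(\distU{x}^p)\le\lyap(x)\le\gainNSStwo(\distU{x}^p)$ for all $x\in\real^n$.

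First I would apply Corollary~\ref{co:kinf-proper-wrt-each-other} to the pair $\lyaphat,\lyapwhat$ on $\dom$. By hypothesis these satisfy $\{\hzero,\hone,\htwo,\hthree,\hfour\}$ with respect to $\seminorm{.}{A}$ and $\seminorm{.}{\Atilde}$, respectively, and $\kernel(A)=\kernel(\Atilde)$ (so by $\hone$ they share the nullset $\dom\cap\kernel(A)$). The corollary then gives $\lyaphat\properinftyccp\lyapwhat$ and $\lyaphat\properinftyccp\seminorm{.}{A}^p$ in $\dom$. Since $\lyap=\lyaphat\circ i$, $\lyapw=\lyapwhat\circ i$ and $\dom=i(\real^n)$, the pullback argument in Remark~\ref{re:another-set-assumptions} transports both relations to $\real^n$: from the upper half of $\lyaphat\properinftyccp\lyapwhat$ I obtain a concave $\increasingw\in\classkinfty$ with $\lyap(x)\le\increasingw(\lyapw(x))$ for all $x\in\real^n$, which together with~\eqref{eq:theorem-hypothesis-ito} establishes (a); and I obtain $\lyap\properinftyccp\seminorm{i(\cdot)}{A}^p$ in $\real^n$, where $\seminorm{i(\cdot)}{A}^p$ is now regarded as a function of $x\in\real^n$.

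For (b) I would then relate $\seminorm{i(\cdot)}{A}^p$ to $\distU{\cdot}^p$. Since $A=\diag(A_1,A_2)$ and $i(x)=[x\tp,R(x)\tp]\tp$, one has $\seminorm{i(x)}{A}^2=\seminorm{x}{A_1}^2+\seminorm{R(x)}{A_2}^2$, so~\eqref{eq:bound-R} yields $\seminorm{x}{A_1}^2\le\seminorm{i(x)}{A}^2\le(1+\kappa)\seminorm{x}{A_1}^2$ and hence $\seminorm{i(\cdot)}{A}^p\relationconstants\seminorm{\cdot}{A_1}^p$ on $\real^n$; note $A_1\neq0$, since otherwise $\seminorm{i(\cdot)}{A}$ would vanish identically on $\dom$ by~\eqref{eq:bound-R}, contradicting $\hzero$ for $\seminorm{.}{A}$. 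Writing $\distU{\cdot}=\seminorm{\cdot}{P}$ for $P$ the orthogonal projector onto $\Uset^\bot$, so that $\kernel(P)=\Uset=\kernel(A_1)$, Lemma~\ref{le:kinf-proper-semidefinite-quadratic-forms} gives $\seminorm{\cdot}{A_1}^p\relationconstants\distU{\cdot}^p$ on $\real^n$. By transitivity of $\relationconstants$ and the implication chain~\eqref{eq:chain-relations}, $\seminorm{i(\cdot)}{A}^p\properinftyccp\distU{\cdot}^p$ on $\real^n$; since $\properinftyccp$ is an equivalence relation (Lemma~\ref{le:kinf-equivalence-relation}), chaining with $\lyap\properinftyccp\seminorm{i(\cdot)}{A}^p$ gives $\lyap\properinftyccp\distU{\cdot}^p$ on $\real^n$, which is exactly~\eqref{eq:theorem-third-hypothesis-V} with convex $\gainNSSone\in\classkinfty$ and concave $\gainNSStwo\in\classkinfty$. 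Together with (a), $\lyap$ is a $p$th~moment NSS-Lyapunov function with respect to $\kernel(A_1)$.

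Finally, Corollary~\ref{co:of-the-main-theorem}(ii) yields $p$th~moment NSS with respect to $\kernel(A_1)$; and since the convex $\gainNSSone\in\classkinfty$ is in particular $\classkinfty$, $\lyap$ is also a strong NSS-Lyapunov function in probability, so Corollary~\ref{co:of-the-main-theorem}(i) yields NSS in probability with respect to $\kernel(A_1)$. The only delicate points — everything else being bookkeeping — are checking that the hypotheses of Corollary~\ref{co:kinf-proper-wrt-each-other} are met (that $\lyaphat,\lyapwhat$ share a nullset on $\dom$, and that $A_1\neq0$ so that Lemma~\ref{le:kinf-proper-semidefinite-quadratic-forms} applies), and tracking that the convexity of $\gainNSSone$ and the concavity of $\increasingw$ are preserved through the pullback in Remark~\ref{re:another-set-assumptions} and through the equivalence-relation chaining.
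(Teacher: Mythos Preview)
Your proposal is correct and follows essentially the same route as the paper's proof: apply Corollary~\ref{co:kinf-proper-wrt-each-other} to $\lyaphat,\lyapwhat$ on $\dom$, pull the resulting $\properinftyccp$ relations back to $\real^n$ via Remark~\ref{re:another-set-assumptions}, use~\eqref{eq:bound-R} and Lemma~\ref{le:kinf-proper-semidefinite-quadratic-forms} (with the orthogonal-projector representation of $\distset{\cdot}{\kernel(A_1)}$) to obtain $\lyap\properinftyccp\distset{\cdot}{\kernel(A_1)}^p$, and conclude with Corollary~\ref{co:of-the-main-theorem}. If anything you are slightly more explicit than the paper in checking the side conditions (shared nullset of $\lyaphat,\lyapwhat$ and $A_1\neq 0$).
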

\begin{proof}
  By Corollary~\ref{co:kinf-proper-wrt-each-other}, we have that
  \begin{align}\label{eq:co-other-assumptions}
    \lyaphat\properinftyccp\lyapwhat, \quad \text{and} \quad
    \lyaphat\properinftyccp\seminorm{.}{\diag(A_1,
      A_2)}^p\quad\text{in}\quad\dom.
  \end{align}
  As explained in Remark~\ref{re:another-set-assumptions}, the first
  relation implies that $\lyap\properinftyccp\lyapw$ in
  $\real^n$. This, together with the fact
  that~\eqref{eq:theorem-hypothesis-ito} holds, implies that $\lyap$
  is a noise-dissipative Lyapunov function
  for~\eqref{eq:nonlinear-SDE-preliminaries}. Also, setting
  $\hat{x}=i(x)$ and using~\eqref{eq:bound-R}, we obtain that
  \begin{align*} 
    \seminorm{x}{A_1}^2\le\seminorm{\hat{x}}{\diag(A_1,
      A_2)}^2=\seminorm{x}{A_1}^2+\seminorm{R(x)}{A_2}^2\le
    (1+\kappa)\seminorm{x}{A_1}^2,
  \end{align*}  
  so, in particular, $\seminorm{[\,. ,R(.)]}{\diag(A_1,
    A_2)}^p\relationconstants\seminorm{.}{A_1}^p$ in~$\real^n$.
  Now, from the second relation in~\eqref{eq:co-other-assumptions}, by
  Remark~\ref{re:another-set-assumptions}, it follows that $\lyaphat
  \circ\, i\properinftyccp\seminorm{[\,.,R(.)]}{\diag(A_1, A_2)}^p$ in
  $\real^n$. Thus, using~\eqref{eq:chain-relations} and
  Lemma~\ref{le:kinf-equivalence-relation}, we conclude that
  $\lyap\properinftyccp\seminorm{.}{A_1}^p$ in $\real^n$. In addition,
  the Euclidean distance to the set $\kernel(A_1)$ is equivalent to
  $\seminorm{.}{A_1}$, i.e.,
  $\distset{.}{\kernel(A_1)}\relationconstants\seminorm{.}{A_1}$. This
  can be justified as follows: choose
  $B\in\realmatricesrectangulararg{n}{k}$, with
  $k=\dim(\kernel(A_1))$, such that the columns of $B$ form an
  orthonormal basis of~$\kernel(A_1)$. Then,
  \begin{align}
    \distset{x}{\kernel(A_1)}=\norm{(\identity -B
      B\tp)x}=\seminorm{x}{\identity -B
      B\tp}\relationconstants\seminorm{.}{A_1},
  \end{align}
  where the last relation follows from
  Lemma~\ref{le:kinf-proper-semidefinite-quadratic-forms} because
  $\kernel(\identity -B B\tp)=\kernel(A_1)$. Summarizing,
  $\lyap\properinftyccp\seminorm{.}{A_1}^p$ and
  $\seminorm{.}{A_1}^p\relationconstants \distset{x}{\kernel(A_1)}^p$
  in $\real^n$ (because the $p$th power is irrelevant for the
  relation~$\relationconstants$). As a consequence,
  \begin{align}\label{eq:proper-alternative-hypotheses}
    \lyap\properinftyccp\distset{.}{\kernel(A_1)}^p\quad
    \text{in}\quad\real^n,
  \end{align}
  which implies condition~\eqref{eq:theorem-third-hypothesis-V} with
  convex $\alpha_1\in\classkinfty$, concave $\alpha_2\in\classkinfty$,
  and $\Uset=\kernel(A_1)$.  Therefore, $\lyap$ is a $p$th moment
  NSS-Lyapunov function with respect to the set~$\kernel(A_1)$, and
  the result follows from Corollary~\ref{co:of-the-main-theorem}.
\end{proof}


\section{Conclusions}\label{sec:conclusions-future}

We have studied the stability properties of SDEs subject to persistent
noise (including the case of additive noise).  We have generalized the
concept of noise-dissipative Lyapunov function and introduced the
concepts of strong NSS-Lyapunov function in probability and
$p$th~moment NSS-Lyapunov function, both with respect to a closed set.
We have shown that noise-dissipative Lyapunov functions have NSS
dynamics and established that the existence of an NSS-Lyapunov
function, of either type, with respect to a closed set, implies the
corresponding NSS property of the system with respect to the set.  In
particular, $p$th moment NSS with respect to a set provides a bound,
at each time, for the $p$th~power of the distance from the state to
the set, and this bound is the sum of an increasing function of the
size of the noise covariance and a decaying effect of the initial
conditions.  This bound can be achieved regardless of the possibility
that inside the set some combination of the states accumulates the
variance of the noise. This is a meaningful stability property for the
aforementioned class of systems because the presence of persistent
noise makes it impossible to establish in general a stochastic notion
of asymptotic stability for the set of equilibria of the underlying
differential equation.  We have also studied in depth the inequalities
between pairs of functions that appear in the various notions of
Lyapunov functions mentioned above.  We have shown that these
inequalities define equivalence relations and have developed a
complete characterization of the properties that two functions must
satisfy to be related by them.  Finally, building on this
characterization, we have provided an alternative statement of our
stochastic stability results.
Future work will include the study of the effect of delays and
impulsive right-hand sides in the class of SDEs considered in this
paper.

\section*{Acknowledgments}
The first author would like to thank Dean Richert for useful
discussions. In addition,
the authors would like to thank Dr. Fengzhong Li for his kind observations that have made possible an important correction of the proof of Theorem~\ref{th:Stability-Non-Linear-Systems}. 
 The research was supported by NSF award CMMI-1300272.

\newcounter{mycounter}
\renewcommand{\themycounter}{A.\arabic{mycounter}}
\newtheorem{lemmaappendix}[mycounter]{Lemma}
\section*{Appendix}\label{sec:appendix}

The next result is used in the proof of
Theorem~\ref{prop:V-is-kinf-proper-seminorm}.

\begin{lemmaappendix}\longthmtitle{Existence of bounding convex and
    concave functions in $\classkinfty$}\label{le:convex-concave}
  Let $\alpha$ be a class~$\classkinfty$ function.  Then the following
  are equivalent:
  \begin{enumerate}
  \item There exist $s_0 \ge 0$ and $\alpha_1,
    \alpha_2\in\classkinfty$, convex and concave, respectively, such
    that $\alpha_1 (s) \le \alpha (s) \le \alpha_2 (s) $ for all $s
    \ge s_0$, and
    
  \item $\alpha(s), \,s^2/\alpha(s)$ are in $\Obig(s)$ as
    $s\to\infty$.
  \end{enumerate}
\end{lemmaappendix}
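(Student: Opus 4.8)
The plan is to prove the two implications separately, and the entire argument rests on one elementary fact that I would establish first: for any $f\in\classkinfty$ with $f(0)=0$, convexity of $f$ forces the ratio $s\mapsto f(s)/s$ to be nondecreasing on $(0,\infty)$, whereas concavity forces it to be nonincreasing. I would prove this by writing, for $0<s_1\le s$, the point $s_1$ as the convex combination $s_1=\tfrac{s_1}{s}\,s+(1-\tfrac{s_1}{s})\cdot 0$ and invoking the definition of convexity (respectively concavity) at the points $0$ and $s$ together with $f(0)=0$. In words: a convex $\classkinfty$ function grows at least linearly and a concave one at most linearly.

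For $(i)\Rightarrow(ii)$, I would start from $\alpha_1(s)\le\alpha(s)\le\alpha_2(s)$ for all $s\ge s_0$, with $\alpha_1$ convex and $\alpha_2$ concave in $\classkinfty$. Fixing any $s_1>0$, the ratio fact gives $\alpha_2(s)\le c_2 s$ with $c_2\triangleq\alpha_2(s_1)/s_1$, and $\alpha_1(s)\ge c_1 s$ with $c_1\triangleq\alpha_1(s_1)/s_1>0$, both for $s\ge s_1$. Combining with the sandwich, for all $s\ge\max\{s_0,s_1\}$ one gets $\alpha(s)\le c_2 s$, hence $\alpha(s)\in\Obig(s)$, and also $s^2/\alpha(s)\le s^2/\alpha_1(s)\le s/c_1$, hence $s^2/\alpha(s)\in\Obig(s)$, which is exactly $(ii)$.

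For $(ii)\Rightarrow(i)$, the observation is that the two $\Obig(s)$ conditions say precisely that $\alpha$ is eventually trapped between two rays of positive slope through the origin: $\alpha(s)\in\Obig(s)$ yields $\kappa,s_0>0$ with $\alpha(s)<\kappa s$ for $s>s_0$, while $s^2/\alpha(s)\in\Obig(s)$ yields $\kappa',s_0'>0$ with $s^2/\alpha(s)<\kappa' s$, equivalently $\alpha(s)>s/\kappa'$, for $s>s_0'$ (the equivalence uses $\alpha(s)>0$ and $s>0$). I would then simply take $\alpha_1(s)\triangleq s/\kappa'$ and $\alpha_2(s)\triangleq\kappa s$, with $s_0$ replaced by $\max\{s_0,s_0'\}$: every map $s\mapsto cs$ with $c>0$ is continuous, strictly increasing, unbounded, and zero at $0$, hence in $\classkinfty$, and is simultaneously convex and concave, so these linear functions already furnish the bounding pair required in $(i)$.

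I do not expect a genuine obstacle here; the lemma is soft once the monotonicity-of-the-ratio fact is in hand. The only point meriting a touch of care is that dividing by $s$ (or by $\alpha(s)$) is legitimate only away from the origin, which is why the inequalities are asserted merely for $s\ge s_0$ and why allowing $s_0>0$ in the statement is essential: if, say, $\alpha$ behaves like $s^2$ near $0$, then no linear lower bound of positive slope can hold on all of $\realnonnegative$, so the ``eventually'' in the statement cannot be dropped.
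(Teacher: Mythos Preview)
Your proof is correct. For $(i)\Rightarrow(ii)$ your argument is essentially the same as the paper's: both exploit that a convex (respectively, concave) $\classkinfty$ function lies above (respectively, below) the secant through the origin and any fixed point $(s_0,\alpha_i(s_0))$.

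For $(ii)\Rightarrow(i)$ you take a genuinely different and much shorter route. The paper defines $\alpha_1$ so that its epigraph is the convex hull of $\epi\alpha$, and $\alpha_2$ so that its hypograph is the convex hull of $\hyp\alpha$, and then spends most of the proof verifying that these constructions are well defined, continuous at~$0$, positive definite, strictly increasing, and unbounded. You instead observe that linear maps $s\mapsto cs$ with $c>0$ are simultaneously convex and concave $\classkinfty$ functions, so the rays $s/\kappa'$ and $\kappa s$ already do the job. What the paper's construction buys is a stronger conclusion: its $\alpha_1,\alpha_2$ satisfy $\alpha_1\le\alpha\le\alpha_2$ on \emph{all} of $\realnonnegative$ (i.e., with $s_0=0$), whereas your linear choices only sandwich $\alpha$ for $s\ge\max\{s_0,s_0'\}$. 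The lemma as stated permits $s_0>0$, so your argument suffices; but note that when the paper invokes the lemma in the proof of Theorem~\ref{prop:V-is-kinf-proper-seminorm}\,(iii), it tacitly uses the $s_0=0$ version to get inequalities valid for all $x\in\dom$.
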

\begin{proof}
  %
  The implication $\emph{(i)}\Rightarrow\emph{(ii)}$ follows because,
  for any $s\ge s_0>0$,
  \begin{align*}
    \tfrac{\alpha_1(s_0)}{s_0} s\le\alpha_1(s)\le\alpha(s)
    \le\alpha_2(s)\le\tfrac{\alpha_2(s_0)}{s_0} s,
  \end{align*}
  by convexity and concavity, respectively, where
  $\alpha_1(s_0),\alpha_2(s_0)>0$.

  To show $\emph{(ii)}\Rightarrow\emph{(i)}$, we proceed to construct
  $\alpha_1,\alpha_2$ as in the statement using the correspondence
  between functions, graphs and epigraphs (or hypographs). Let
  $\map{\alpha_1}{\realnonnegative}{\real}$ be the function whose
  epigraph is the convex hull of the epigraph of $\alpha$, i.e.,
  $\epi{\alpha_1}\triangleq\co({\epi{\alpha}})$.  Thus, $\alpha_1$ is
  convex, nondecreasing, and $0\le\alpha_1(s)\le\alpha(s)$ for all
  $s\ge 0$ because
  $\realnonnegative\times\realnonnegative\supseteq\epi{\alpha_1}=\co({\epi{\alpha}})\supseteq\epi{\alpha}$. Moreover,
  $\alpha_1$ is continuous in $(0,\infty)$ by
  convexity~\cite[Th. 10.4]{RTR:70}, and is also continuous at~$0$ by
  the sandwich theorem~\cite[p. 107]{JL-ML:88} because
  $\alpha\in\classkinfty$.  To show that $\alpha_1\in\classkinfty$, we
  have to check that it is unbounded, positive definite in
  $\realnonnegative$, and strictly increasing. First, since
  $s^2/\alpha(s)\in \Obig(s)$ as $s\to\infty$, there exist constants
  $c_1, s_0>0$ such that $\alpha(s)\ge c_1 s$ for all $s> s_0$. Now,
  define $g_1(s)\triangleq\alpha(s)$ if $s\le s_0$ and
  $g_1(s)\triangleq c_1 s$ if $s> s_0$, and $g_2(s)\triangleq -c_1 s_0
  + c_1s$ for all $s\ge 0$, so that $g_2\le g_1\le\alpha$.  Then,
  $\epi{\alpha_1}=\co({\epi{\alpha}})\subseteq\co(\epi{g_1})\subseteq\epi{g_2}$,
  because $\epi{g_2}$ is convex, and thus $\alpha_1$ is unbounded.
  Also, since
  $\co(\epi{g_1})\,\cap\,\realnonnegative\times\{0\}=\{(0,0)\}$, it
  follows that $\alpha_1$ is positive definite.
  To show that $\alpha_1$ is strictly increasing, we use two facts:
  since $\alpha_1$ is convex, we know that the set in which $\alpha_1$
  is allowed to be constant must be of the form $[0,b]$ for some
  $b>0$; on the other hand, since $\alpha_1$ is positive definite, it
  is nonconstant in any neighborhood of~$0$. As a result, $\alpha_1$
  is nonconstant in any subset of its domain, so it is strictly
  increasing.

  Next, let $\map{\alpha_2}{\realnonnegative}{\real}$ be the function
  whose hypograph is the convex hull of the hypograph of $\alpha$,
  i.e., $\hyp{\alpha_2}\triangleq\co({\hyp{\alpha}})$. The function
  $\alpha_2$ is well-defined because $\alpha(s)\in\Obig(s)$ as
  $s\to\infty$, i.e., there exist constants $c_2, s_0>0$ such that
  $\alpha(s)\le c_2 s$ for all $s> s_0$, so if we define
  $g(s)\triangleq c_2 s_0 + c_2 s$ for all $s\ge 0$, then
  $\hyp{\alpha_2}=\co({\hyp{\alpha}})\subseteq\hyp{g}$, because
  $\hyp{g}$ is convex, and thus $\alpha_2(s)\le g(s)$.  Also, by
  construction, $\alpha_2$ is concave, nondecreasing, and
  $\alpha_2\ge\alpha$ because $\hyp{\alpha_2}\supseteq\hyp{\alpha}$,
  which also implies that $\alpha_2$ is unbounded.  Moreover,
  $\alpha_2$ is continuous in $(0,\infty)$ by
  concavity~\cite[Th. 10.4]{RTR:70}, and is also continuous at~$0$
  because the possibility of an infinite jump is excluded by the fact
  that $\alpha_2\le g$.
  To show that $\alpha_2\in\classkinfty$, we have to check that it is
  positive definite in $\realnonnegative$ and strictly
  increasing. Note that $\alpha_2$ is positive definite because
  $\alpha_2(0)=0$ and $\alpha_2\ge\alpha$.  To show that $\alpha_2$ is
  strictly increasing, we reason by contradiction. Assume that
  $\alpha_2$ is constant in some closed interval of the form $[s_1,
  s_2]$, for some $s_2>s_1\ge 0$. Then, as $\alpha_2$ is concave, we
  conclude that it is nonincreasing in $(s_2,\infty)$. Now, since
  $\alpha_2$ is continuous, we reach the contradiction that
  $\lim_{s\to\infty}\alpha(s) \le
  \lim_{s\to\infty}\alpha_2(s)\le\alpha_2(s_1)<\infty$. Hence,
  $\alpha_2$ is strictly increasing.
\end{proof}

\end{document}